\titleformat{\subsubsection}[runin]{\normalfont\bfseries}{\thesubsubsection.}{3pt}{}
\newcommand*{\DashedArrow}[1][]{\mathbin{\tikz [baseline=-0.25ex,-latex, dashed,#1] \draw [#1] (0pt,0.5ex) -- (1.3em,0.5ex);}}%
\newtheorem{proposition}[subsubsection]{Proposition}
\newtheorem{theorem}[subsubsection]{Theorem}
\newtheorem{lemma}[subsubsection]{Lemma}
\newtheorem{corollary}[subsubsection]{Corollary}
\theoremstyle{definition}
\newtheorem{definition}[subsubsection]{Definition}
\newtheorem{problem}{Problem}
\theoremstyle{plain}
\newtheorem{thm}{Theorem}
\newcommand{\C}{\mathbb C}
\newcommand{\Z}{\mathbb Z}
\newcommand{\Q}{\mathbb Q}
\newcommand{\R}{\mathbb R}
\renewcommand{\O}{\mathcal O}
\newcommand{\di}{\partial}
\renewcommand{\P}{\mathbb{P}}
\newcommand{\F}{\mathbb F}
\newcommand{\Tot}{\operatorname{Tot}}
\newcommand{\Bl}{\operatorname{Bl}}
\newcommand{\diag}{\operatorname{diag}}
\renewcommand{\phi}{\varphi}
\newcommand{\V}{\mathrm{Var}}
\title{Complex surfaces with many algebraic structures}
\author{Anna Abasheva, Rodion D\'eev}
\date{}
\begin{document}

\maketitle

\begin{abstract}

We find new examples of complex surfaces with countably many non-isomorphic algebraic structures. Here is one such example: take an elliptic curve $E$ in $\mathbb P^2$ and blow up nine general points on $E$. Then the complement $M$ of the strict transform of $E$ in the blow-up has countably many algebraic structures. Moreover, each algebraic structure comes from an embedding of $M$ into a blow-up of $\mathbb P^2$ in nine points lying on an elliptic curve $F\not\simeq E$. We classify algebraic structures on $M$ using a {\bf Hopf transform}: a way of constructing a new surface by cutting out an elliptic curve and pasting a different one. Next, we introduce the notion of an {\bf analytic K-theory of varieties}. Manipulations with the example above lead us to prove that classes of all elliptic curves in this K-theory coincide. To put in another way, all motivic measures on complex algebraic varieties that take equal values on biholomorphic varieties do not distinguish elliptic curves.

\end{abstract}

\tableofcontents

\newpage

\section{Introduction}

A compact complex analytic variety $X^{an}$ has at most one algebraic structure; that is to say, there is at most one algebraic variety $Y$ such that $Y^{an} \cong X^{an}$. However, this is false in the non-compact world. We do not have general techniques to describe the set of algebraic structures on a non-compact complex variety. Available results show that a variety can have infinitely many of algebraic structures. We briefly review examples in dimension two.

\begin{itemize}
    \item Let $C$ be an affine curve of positive genus. Then $C^{an} \times \C$ admits uncountably many non-isomorphic affine structures \cite{Jelonek}. Indeed, every line bundle on $C$ is analytically trivial, hence its total space is biholomorphic to $C^{an}\times \C$. 
    \item The surface $\C^* \times \C^*$ is biholomorphic to the unique non-trivial affine bundle of rank one and degree zero over any elliptic curve \cite[Thm.\:6.12]{Neeman}. Hence, it has uncountably many non-isomorphic algebraic structures. 
    \item Let $C$ be a smooth projective curve and $p\colon X\to C$ an algebraic elliptic surface. Choose a smooth fiber $E = p^{-1}(x)$. For every $n>0$, we can find an elliptic fibration $p_n\colon X_n\to C$, called a {\bf log-transform} of $p$, that has a fiber of multiplicity $n$ over $x$ and is biholomorphic to $p$ over $C-\{x\}$. The new surfaces $X_n$ are algebraic unless $X\simeq E\times C$ \cite[Ch.\:I, Thm.\:6.12]{Friedman_Morgan}. Hence the surface $X-E$ has at least countably many algebraic structures.
    \item We can construct even more algebraic structures on the surface $X-E$ from the previous example using Shafarevich--Tate twists. We refer the reader to \cite[Ch.\:I, Sections 1.5.1, 1.5.3]{Friedman_Morgan} for more details.
\end{itemize}

We complement this list by the following beautiful example.

\begin{thm}[Theorem \ref{many-algebraic-structures}, Corollary \ref{Hopf-transform-preserves-rationality}]\begin{enumerate}
\label{intro_theorem_A}
    \item Pick nine points in $\P^2$ in sufficiently general position. Let $X$ be the blow-up of $\P^2$ in these points and $E$ the strict transform of the unique elliptic curve passing through them. Then the complement of $E$ in $X$ admits countably many algebraic structures.
    \item More generally, let $X$ be a compact rational surface containing a square-zero elliptic curve $E$. Assume that the normal bundle to $E$ in $X$ is sufficiently general. Then the surface $X-E$ admits countably many algebraic structures.

\end{enumerate}

\end{thm}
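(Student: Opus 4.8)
The plan is to reduce (1) to (2) and then to prove (2) by performing an explicit \emph{Hopf transform} on the universal cover of a punctured tubular neighborhood of $E$. The reduction is cheap: if $X$ is the blow-up of $\P^2$ at nine general points $p_1,\dots,p_9$ of a smooth plane cubic $C$, the strict transform $E$ of $C$ has $E^2=0$ and $N_{E/X}=\O_X(E)|_E=-K_X|_E=\O_E(3H-p_1-\cdots-p_9)$, and since $(p_i)\mapsto 3H-\sum p_i$ maps $C^9$ onto $\Pic^0(C)$, moving the points realizes $N_{E/X}$ as an arbitrary, in particular sufficiently general, degree-zero line bundle. For (2) the first step is to linearize the neighborhood: reading ``sufficiently general'' as $N:=N_{E/X}$ being non-torsion in $\Pic^0(E)$ with flat monodromy satisfying a Diophantine condition, the linearization theorem for a neighborhood of a curve with Diophantine degree-zero normal bundle gives a neighborhood $U$ of $E$ in $X$ biholomorphic to a neighborhood of the zero section in the total space $\Tot(N)$ of the flat line bundle $N$. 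Hence $M=X-E$ is a compact manifold-with-boundary $M_0$ (with $\partial M_0$ a $3$-torus) glued to a deleted tubular neighborhood $\mathcal C\cong(\C_z\times\Delta^*_w)/\Lambda$, where $E=\C/\Lambda$ and $\lambda\in\Lambda$ acts by $(z,w)\mapsto(z+\lambda,\chi(\lambda)w)$ for the unitary character $\chi\colon\Lambda\to U(1)$ representing $N$.

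Now the Hopf transform. Writing $w=e^{2\pi i u}$, the germ of $\mathcal C$ along $E$ becomes the germ at $\operatorname{Im}u=+\infty$ of $\C^2_{(z,u)}/\widehat\Gamma$, where $\widehat\Gamma$ is the rank-three lattice spanned by $(\omega_1,s)$, $(\omega_2,t)$ and the ``meridian'' $(0,1)$; here $\omega_1,\omega_2$ is a basis of $\Lambda$ and $\chi(\omega_1)=e^{2\pi i s}$, $\chi(\omega_2)=e^{2\pi i t}$ with $s,t\in\R$. A Hopf transform is the move: choose a primitive $\mu\in\widehat\Gamma$ and a block-triangular map $A=\left(\begin{smallmatrix}a&b\\0&\lambda\end{smallmatrix}\right)$ with $a\in\C^*$, $b\in\C$, $\lambda\in\R_{>0}$ such that $A\mu=(0,1)$. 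Then $A$ sends $\{\operatorname{Im}u>C\}$ into $\{\operatorname{Im}u>\lambda C\}$, hence descends to a biholomorphism of germs, and $A\widehat\Gamma$ has the same shape, i.e.\ is the model of a deleted neighborhood of a square-zero elliptic curve $E_\mu$ with a degree-zero normal bundle $N_\mu$. Re-gluing $M_0$ to the full neighborhood of $E_\mu$ in $\Tot(N_\mu)$ along $A$ produces a compact complex surface $X_\mu$ with $X_\mu-E_\mu\cong M$; by Corollary~\ref{Hopf-transform-preserves-rationality} a Hopf transform preserves rationality, so $X_\mu$ is a rational, hence algebraic, structure on $M$.

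To produce countably many pairwise non-isomorphic structures I would take $\mu_k=(0,1)+k(\omega_1,s)$ for the infinitely many integers $k$ with $1+ks>0$. A direct computation identifies $E_k:=E_{\mu_k}$ with $\C/(\Z+\Z\tau_k)$, where $\tau_k=\tau+k\eta$, $\tau$ is the modulus of $E$, and $\eta$ is built from $\tau,s,t$ (one finds $\eta=\tau s-t$ in suitable coordinates); for $(E,N)$ sufficiently general $\eta\ne0$ and the $\tau_k$ are pairwise $\mathrm{SL}_2(\Z)$-inequivalent, so the $E_k$ are pairwise non-isomorphic. Finally, the isomorphism class of $E_k$ is an invariant of the variety $X_k-E_k$: one reads off the polarized Hodge structure $H^1(E_k)$ from the mixed Hodge structure on $H^2(X_k-E_k)$, which, $X_k$ being rational, surjects onto $H^1(E_k)(-1)$. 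Hence $M$ carries at least countably many algebraic structures.

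I expect the main obstacle to be the matching upper bound, i.e.\ that \emph{every} algebraic structure on $M$ is one of the $X_\mu$, which makes ``countably many'' sharp. This amounts to classifying all biholomorphisms between two model deleted neighborhoods $\C^2/\widehat\Gamma$, $\C^2/\widehat\Gamma'$, and showing each is --- up to translations and a relabelling of the lattice --- one of the block-triangular linear maps $A$ above. Genericity of $N$ is exactly what makes this work: the Diophantine condition both supplies the linearization of the first step and rules out exotic (non-linear, or $z$-dependent in the $u$-direction) germ biholomorphisms that would otherwise scramble or enlarge the list. Once this is in place, valid meridians modulo the residual symmetry form a countable set and the count is complete; for the lower bound alone the first three paragraphs suffice, the sole external input being the linearization theorem.
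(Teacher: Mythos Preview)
Your construction of the lower bound follows the paper's line almost exactly: linearize a tubular neighborhood of $E$ via the Arnold--Ueda theorem under the Diophantine hypothesis, perform a Hopf transform (your explicit rank-three lattice picture is the universal-cover version of the paper's description via diagonal Hopf surfaces $H(\lambda,\mu)$), and invoke Corollary~\ref{Hopf-transform-preserves-rationality} to see that each resulting $X_\mu$ is rational, hence projective. One genuine difference: to show that different grafts $E_k$ yield non-isomorphic algebraic structures on $M$, you read $H^1(E_k)$ off the mixed Hodge structure on $H^2(M)$; the paper instead observes that an algebraic isomorphism $X_k-E_k \cong X_{k'}-E_{k'}$ extends to a birational map $X_k\dashrightarrow X_{k'}$, and a birational map of smooth surfaces cannot contract an elliptic curve, forcing $E_k\cong E_{k'}$. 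Your Hodge-theoretic argument is correct and conceptually pleasant, but the paper's is shorter and needs no MHS machinery.

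There is, however, a real gap in your treatment of the upper bound. You write that showing every algebraic structure on $M$ arises as some $X_\mu$ ``amounts to classifying all biholomorphisms between two model deleted neighborhoods $\C^2/\widehat\Gamma$, $\C^2/\widehat\Gamma'$.'' That reduction is only valid once you already know that an arbitrary minimal compactification $(S',C')$ of $M$ has $C'$ a smooth elliptic curve with degree-zero normal bundle and a linearizable tubular neighborhood---none of which is automatic. A priori $C'$ could be singular, reducible, rational, or have a normal bundle that fails the Diophantine condition, and your germ-classification never gets off the ground. The paper closes this gap by transplanting the problem to $\Tot(L)$: the collar of $M$ is biholomorphic to a punctured neighborhood of $0_L$ in $\Tot(L)$, so any compactification of $M$ yields a compactification of $\Tot(L)$, and Enoki's classification theorem (Theorem~\ref{enoki-classification}) then forces that compactification to be either the ruled surface $\P(\O\oplus L)$ or a Hopf surface---in either case a Hopf transform. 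Once you have Enoki, the germ-rigidity you sketch (the paper's Lemma~\ref{open embeddings}) finishes the count; without it, your argument establishes only ``at least countably many.''
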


In addition, in the Case 1 every algebraic structure on $X-E$ arises from a holomorphic embedding into a blow-up $Y$ of $\P^2$ in some other nine points. The complement of $X-E$ in $Y$ is an elliptic curve $F$ which is remarkably {\em not isomorphic and not even isogenous} to $E$. In the algebraic realm, this is only possible for singular curves (\cite{Blanc_Poloni_van_Santen} and references therein). Indeed, if $S$ and $S'$ are smooth algebraic surfaces containing smooth curves $C$ and $C'$ respectively, then an algebraic isomorphism between $S-C$ and $S'-C'$ implies an isomorphism between $C$ and $C'$.

\hfill

Our construction in Theorem \ref{intro_theorem_A} is fundamentaly different from those in the list above. We developed a method to manufacture new surfaces from a surface with a square-zero elliptic curve that we call a {\bf Hopf transform} (Definition \ref{Hopf_transform}). It serves as a counterpart to a log-transform in the case of non-elliptic surfaces.

\hfill

We can summarize a Hopf transform as follows: take an elliptic curve on a surface, cut it out, and paste another elliptic curve instead. Thanks to a theorem of Arnold and Ueda (Theorem \ref{Arnold}), a square-zero elliptic curve $E \subset S$ whose normal bundle $L$ is sufficiently general has a holomorphic tubular neighbourhood. That is to say, $S$ is locally biholomorphic to $\Tot(L)$ along $E$. We will clarify the meaning of ``sufficiently general'' in Definition \ref{diophantine-definition}; for now, it is enough to say that all ``sufficiently general'' line bundles are non-torsion. Using Hopf surfaces, i.e., non-K\"ahler surfaces of the form $\C^2-\{0\}/\diag(\lambda,\mu)$ and their finite quotients, we show that there exist countably many degree zero line bundles $L'\to F$ over different elliptic curves $F$ such that $\Tot(L)^{an} - E \cong \Tot(L')^{an} - F$. We call such line bundles $L$ and $L'$ {\bf analytically cobordant} (Definition \ref{def of duals and cobordant}). A Hopf transform, which can be interpreted as a gluing of a surface with an appropriate Hopf surface, replaces an elliptic curve $E\subset S$ with $F$. Moreover, we are free to choose from a countable set of candidates for $F$. We apply this surgery to a blow-up of nine points on $\P^2$ and show that the result is again a blow-up of $\P^2$ along some other nine points. It yields Theorem \ref{many-algebraic-structures}.

\hfill

Our notion of a Hopf transform was inspired by Koike and Uehara's construction in \cite{Koike_Uehara_non_proj}. They use the theorem of Arnold and Ueda mentioned above to glue two open subsets of two different blow-ups of $\P^2$ in nine points into a K3 surface.

\hfill

We apply Theorem \ref{intro_theorem_A} to study the {\bf analytic Grothendieck group} $K_0^{an}$ of varieties (Definition \ref{analytic-grothendieck-definition}). It is the quotient of the Grothendieck group $K_0(\V_\C)$ of complex algebraic varieties\footnote{$K_0(\V_\C)$ is the abelian group generated by classes of algebraic varieties over $\C$ modulo scissor relations: $[X]-[Y] = [X-Y]$, where $Y$ is a closed subvariety of $X$.} by additional relations of the form
$$
[X] = [Y] \:\:\:\text{     if     }\:\:\: X^{an}\simeq Y^{an}.
$$
While $K_0(\V_\C)$ has attracted a considerable amount of attention (see \cite{Bittner,Kontsevich_Tschinkel_specialization,Nicaise_Shinder} among many others), the notion of $K_0^{an}$ is, to our knowledge, new. The group $K_0^{an}$ is strikingly different from its algebraic counterpart as the theorem below illustrates.

\begin{thm}[Theorem \ref{analytic K zero}]\label{intro_theorem_B}
    All elliptic curves have the same classes in $K_0^{an}$. 
\end{thm}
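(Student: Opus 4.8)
The plan is to play the scissor relations in $K_0^{an}$ against the elementary fact that the class of a blow-up of $\P^2$ in nine points does not depend on the points. Write $\mathbb L := [\mathbb A^1]\in K_0(\V_\C)$, and recall that for any surface $S$ and point $p\in S$ one has $[\Bl_p S]=[S]+\mathbb L$, so the blow-up of $\P^2$ in \emph{any} nine points has class $[\P^2]+9\mathbb L = 1+10\mathbb L+\mathbb L^2$.

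\textbf{Step 1: one admissible Hopf transform equates two elliptic classes.} Suppose $X$ is a blow-up of $\P^2$ in nine sufficiently general points lying on an elliptic curve, $E\subset X$ the strict transform (a square-zero curve with sufficiently general normal bundle $L$), and that the Hopf transform along $E$ associated with a cobordism $(E,L)\rightsquigarrow(F,L')$ produces a surface $Y$ which is again a blow-up of $\P^2$ in nine points and which contains the pasted elliptic curve $F$, so that $X-E$ and $Y-F$ are biholomorphic. Then in $K_0^{an}$ the scissor relation $[X]=[X-E]+[E]$ together with $[X-E]=[Y-F]$ (valid in $K_0^{an}$ because the two surfaces are biholomorphic) gives
$$[E]=[X]-[X-E]=[Y]-[Y-F]=[F],$$
since $[X]=[Y]$ already in $K_0(\V_\C)$ by the blow-up computation. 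The blow-up is doing real work here: cutting $E$ out of $\Tot(L)$ and pasting in $F$ only yields the weaker relation $(\mathbb L-1)([E]-[F])=0$, because $L$ is algebraic and $[\Tot L]=\mathbb L[E]$, $[\Tot L']=\mathbb L[F]$. By Theorem \ref{intro_theorem_A} and the discussion following it, at least one such $F$ (necessarily not isomorphic to $E$) exists for every choice of $E$ and of the nine points.

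\textbf{Step 2: any two elliptic curves are joined by Hopf transforms.} Fix arbitrary elliptic curves $E$ and $F$. As the nine points on $E$ vary, the normal bundle $\O_X(E)|_E$ sweeps out all degree-zero line bundles on $E$, so it suffices to exhibit a sufficiently general degree-zero pair $(E,L)$ whose analytic cobordism class contains a pair $(F,L')$ with $L'$ again sufficiently general; the machinery behind Theorem \ref{intro_theorem_A} then guarantees the corresponding Hopf transform carries the nine-point blow-up $X$ to a nine-point blow-up $Y$ containing $F$, and Step 1 applies. Here the Hopf-surface model is decisive: slicing $(\C^2-\{0\})/\diag(\lambda,\mu)$ along its two elliptic curves exhibits the degree-zero line bundle over $\C^*/\lambda^{\Z}$ with multiplicative parameter $\mu$ as analytically cobordant to the degree-zero line bundle over $\C^*/\mu^{\Z}$ with multiplicative parameter $\lambda$ — a clean ``swap'' of the two pieces of data. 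Choosing multiplicative presentations $E\cong\C^*/\lambda^{\Z}$ and $F\cong\C^*/\mu^{\Z}$ (each available for an entire $\operatorname{SL}_2(\Z)$-orbit of parameters) with $0<|\lambda|\le|\mu|<1$ and with the Diophantine genericity of Definition \ref{diophantine-definition} in force — shrinking deeper into the genericity locus, and using finite quotients of Hopf surfaces should extra room be needed — realizes $F$ as a Hopf partner of $E$, possibly after composing finitely many such transforms. Applying Step 1 along this chain gives $[E]=[F]$ in $K_0^{an}$, which proves the theorem.

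\textbf{Main obstacle.} Step 1 is a formal manipulation with scissor relations once Theorem \ref{intro_theorem_A} is granted; the substance is Step 2, namely checking that the analytic cobordism relation on sufficiently general pairs (elliptic curve, degree-zero line bundle) — via the Hopf-surface swap and its finite quotients — is rich enough to connect \emph{any} two prescribed elliptic curves while staying among the normal bundles realized by nine-point blow-ups of $\P^2$ and inside the Diophantine genericity locus. This is exactly the geometric content that the Hopf transform and Theorem \ref{intro_theorem_A} are engineered to provide, so the proof amounts to assembling those pieces together with the blow-up-cancels-in-$K_0$ observation.
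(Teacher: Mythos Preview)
Your Step 1 is correct and matches the paper's argument exactly: both use that any nine-point blow-up of $\P^2$ has the same class $1+10\mathbb L+\mathbb L^2$, together with Theorem \ref{hopf_of_blow_up9}, to deduce $[E]^{an}=[F]^{an}$ whenever $E$ and $F$ are linked by a Diophantine analytic cobordism.

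Step 2 has a genuine gap. You fix two target curves $E$ and $F$ and try to realize them as the two elliptic curves on a single Hopf surface $H(\lambda,\mu)$ with the normal bundle $\mathcal L(\lambda,\mu)$ Diophantine. But the available parameters $\lambda$ (resp.\ $\mu$) form only the countable $\operatorname{SL}_2(\Z)$-orbit of a modulus for $E$ (resp.\ $F$), so the line bundles $\mathcal L(\lambda,\mu)$ you can produce form a \emph{countable} subset of $\Pic^0(E)$. The non-Diophantine locus, while of measure zero, is uncountable and could in principle contain every element of this countable set; nothing in your argument rules this out. The phrases ``shrinking deeper into the genericity locus'', ``using finite quotients of Hopf surfaces should extra room be needed'', and ``possibly after composing finitely many such transforms'' acknowledge the difficulty but do not resolve it.

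The paper sidesteps this by never trying to link the two prescribed curves directly. Instead it observes that for a fixed $E$, as $\mu$ ranges over a \emph{continuum} (not a countable orbit), the curve $F=E_\mu$ sweeps out the whole moduli space, and the set of $F$ for which $\nu_{E/H_{E,F}}$ fails to be Diophantine is meagre. Given two arbitrary curves $E$ and $E'$, the two ``good'' sets are comeagre, hence intersect; any $F$ in the intersection serves as a bridge, and Step 1 gives $[E]^{an}=[F]^{an}=[E']^{an}$. You correctly identified this connectivity question as the main obstacle, but the missing idea is precisely this Baire-category bridge argument, which replaces your countable search by a full-measure one.
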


\noindent In contrast, non-isomorphic curves have different classes in $K_0(\V_\C)$ \ref{Bittner-theorem}.

\hfill

This paper is organized as follows. In Section \ref{hopf-transforms-section}, we prove that the total space of a non-torsion degree-zero line bundle on an elliptic curve has countably many compactifications by different elliptic curves (Theorem \ref{embeddings into all hopfs}). Then we introduce the notions of Hopf duality and analytic cobordance (Defintion \ref{def of duals and cobordant}), and give the definition of Hopf transforms (Definition \ref{Hopf_transform}). In Section \ref{square-zero-section}, we partially classify surfaces to which Hopf transforms apply, that is, surfaces with square-zero elliptic curves with non-torsion normal bundle. With this classification in mind, we prove Theorem \ref{intro_theorem_A}, the main result of our paper. Finally, in Section \ref{grothendieck-section}, we introduce the analytic Grothendieck group and prove Theorem \ref{intro_theorem_B}. Towards the end, we pose several intriguing questions about the analytic Grothendieck group.


\section{Hopf transforms}\label{hopf-transforms-section}

\subsection{Analytic compactifications}


\begin{definition}
    Let $M$ be a non-compact complex surface. A pair $(S, C)$ of a smooth compact complex surface $S$ and (possibly singular) curve $C \subset S$ is called an {\it analytic compactification} or just a {\it compactification of $M$} if $S-C$ is biholomorphic to $M$. A compactification is called {\em minimal} if none of the curves in the support of $C$ are exceptional.
\end{definition}

The following classification theorem is due to Enoki \cite[Theorem C-I$_0$]{Enoki82}.


\begin{theorem}\label{enoki-classification}
    Let $\Tot(L)$ be the total space of a line bundle $L$ of degree zero over an elliptic curve $E$. Then every minimal compactification $(S,C)$ of $M$ is of the following form:
    \begin{enumerate}
        \item $S\cong \P({\cal O}\oplus L)$ is a $\P^1$-bundle over an elliptic curve; $C$ is its section, and $(C)^2=0$.
        \item $S$ is a Hopf surface (Definition \ref{Hopf_surface_def}), and $C$ is an elliptic curve.
    \end{enumerate}
\end{theorem}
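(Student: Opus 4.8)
Write $M=\Tot(L)$, $\pi\colon M\to E$ for the bundle projection, and $Z\subset M$ for the zero section. The plan is to run the Enriques--Kodaira classification on $S$, exploiting the rigidity of $M$. First I would record the basic constraints: $Z\cong E$ is compact, so it survives in $S$ as a smooth elliptic curve disjoint from $C$, with $Z^2=0$ (its normal bundle is $L$), hence $K_S\cdot Z=0$ by adjunction, and it carries a holomorphic tubular neighbourhood inside $M$ (a disc subbundle of $L$). Since $\Tot(L)$ has a single end --- a circle bundle over $E$ --- the curve $C$ is connected, and by minimality no component of $C$ is a $(-1)$-curve. Moreover $K_M=\pi^*L^{-1}$, so for every $n\ge 1$
\[
H^0(M,K_M^{\otimes n})=\bigoplus_{k\ge 0}H^0(E,L^{-n-k})=0
\]
when $L$ is non-torsion (the case needed here; for torsion $L$ one passes to the \'etale cover $\tilde E\times\C$). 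By the identity theorem a pluricanonical form on $S$ vanishing on the dense open subset $M$ is identically zero, so $\kappa(S)=-\infty$; hence, by minimality, $S$ is $\P^2$, a Hirzebruch surface $\F_n$, a geometrically ruled surface over a curve of genus $\ge 1$, or a minimal surface of class VII.

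Now $\P^2$ is impossible, since it contains no compact curve disjoint from $C$, and $\F_n$ is impossible, since an adjunction computation shows that it contains no smooth elliptic curve of self-intersection $0$. If $S=\P(\mathcal E)\to B$ with $g(B)\ge 1$, then, since $Z\hookrightarrow M$ is a deformation retract, $\pi_1(Z)\twoheadrightarrow\pi_1(S)=\pi_1(B)$; together with Riemann--Hurwitz for $Z\to B$ this forces $B$ to be elliptic and $Z$ to be a section, so $B\cong E$. A component of $C$ cannot be a fibre (else $M$ would fail to surject onto $E$ under $\pi$), so every component of $C$ maps finitely onto $E$ and has genus $\ge 1$; since $\chi(C)=\chi(S)-\chi(M)=0$, the curve $C$ is a single smooth elliptic curve. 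Using that $\Tot(L)$ contains exactly one compact irreducible curve when $L$ is non-torsion --- a short computation of linear systems on $\P(\O\oplus L)$ --- one excludes $\mathcal E$ being a twist of the Atiyah bundle and identifies $C$ as the section disjoint from $Z$; hence $\mathcal E$ splits and, after a twist, $S=\P(\O\oplus L)$ with $C$ the complementary section, $C^2=0$. This is case~(1).

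It remains to treat $S$ of class VII containing the curve $Z\cup C$, and this is where I expect the real difficulty. Here I would appeal to Enoki's classification of surfaces of class VII containing a curve: every surface on that list other than the Hopf surfaces (the Inoue--Hirzebruch surfaces, the Enoki surfaces, and the more general Kato surfaces) contains only smooth rational curves --- for instance because such surfaces carry an effective anticanonical divisor, whence $K_S\cdot D\le 0$ for every effective $D$, while the intersection form is negative definite, so adjunction forces $p_a(D)=0$. Since $S$ contains the smooth elliptic curve $Z$, disjoint from $C$, all these alternatives are ruled out and $S$ is a Hopf surface. Finally, on a Hopf surface $b_2=0$, so $K_S$ is numerically trivial and adjunction gives $p_a=1$ for every irreducible curve, while the components of any curve are pairwise disjoint; with $C$ connected this forces $C$ to be irreducible of arithmetic genus $1$, and inspecting $S-C\cong\Tot(L)$ rules out a singular model, so $C$ is a smooth elliptic curve --- case~(2). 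The whole argument hinges on this last step: it rests on the (nontrivial) structure theory of class VII surfaces, the decisive point being that the exotic ones carry only rational curves, so that the mere presence of the elliptic curve $Z$ in the complement of $C$ pins $S$ down to a Hopf surface.
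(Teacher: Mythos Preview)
The paper does not prove this theorem: it is quoted as a result of Enoki, with the citation \cite[Theorem~C-I$_0$]{Enoki82}, so there is no proof in the paper to compare against. Your outline --- show $\kappa(S)=-\infty$ and run through the classification of minimal surfaces --- is the natural one, and the ruled-surface case is essentially right. It can in fact be made cleaner than you indicate: once $Z$ is a section with normal bundle $L$, one has $S=\P(\mathcal E)$ with $\mathcal E$ an extension of $L$ by $\O_E$, and for non-trivial $L$ the group $\Ext^1(L,\O_E)=H^1(E,L^{-1})$ vanishes by Serre duality, so $\mathcal E\cong\O\oplus L$ automatically and there is no Atiyah bundle to exclude by hand. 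The curve $C$ then lies in $S-Z\cong\Tot(L^{-1})$, whose only compact curve (for non-torsion $L$) is the zero section.

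Two genuine gaps remain. First, the torsion case is not handled: your pluricanonical vanishing uses $L$ non-torsion, and the parenthetical ``pass to the \'etale cover $\tilde E\times\C$'' is not a proof, since analytic compactifications do not lift functorially along finite covers. Second, and more importantly, the class~VII step \emph{is} Enoki's theorem. Your sketch presupposes that a minimal class~VII surface containing a curve lies on the known list (Hopf, Inoue--Hirzebruch, Enoki, Kato), but that is the GSS conjecture and is open in general; the heuristic about effective anticanonical divisors and negative-definite intersection forms does not bypass it. The input one actually needs --- that a minimal class~VII surface containing a \emph{smooth} curve of square zero is a Hopf surface or a parabolic Inoue surface, with the square-zero curve on the latter being rational --- is precisely Enoki's result, the very statement the paper is citing (and invokes again as \cite[Proposition~4.12]{Enoki} in Proposition~\ref{classification-for-class-vii}). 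So your argument correctly reduces to Enoki's classification, but the justification you offer for the class~VII step is not a substitute for that citation.
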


If $L$ is non-torsion, the set of Hopf surfaces compactifying $\Tot(L)$ is countable, as we will see in Theorem \ref{embeddings into all hopfs}. For now, let us note that all such compactifications contain at least two elliptic curves.


\subsection{Hopf surfaces}

\begin{definition}
\label{Hopf_surface_def}\cite[Sect.\:10]{KodairaII}
    A {\it Hopf surface} is a compact complex surface whose universal cover is $\C^2-\{0\}$. A Hopf surface is called {\em primary} if its fundamental group is $\Z$, otherwise it is called {\em secondary}.
\end{definition}


\subsubsection{}\label{subsubsection primary Hopfs non linear}
All Hopf surfaces are non-K\"ahler. By \cite[Sect.\:10, p.\:695]{KodairaII} every primary Hopf surface $X$ is biholomorphic to a quotient $\C^2-\{0\}/ \Gamma$ where $\Gamma$ is either a linear diagonal operator or of the form
$$
\Gamma(x,y) = (\alpha^nx + y^n, \alpha y).
$$
In the second case, a Hopf surface has only one irreducible curve: the image of the $x$-axis. Therefore, such surfaces do not arise as compactifications of $\Tot(L)$. 


\subsubsection{}
\label{subsubsection primary Hopfs linear}
Let us study the case when $\Gamma = \diag(\lambda,\mu)$. In order for $\C^2-\{0\}/\Gamma$ to be compact, the absolute values of the eigenvalues must be either both greater than one or both less than one. By replacing $\Gamma$ with $\Gamma^{-1}$ if necessary, we may assume the latter. 

\begin{definition}
    A {\em diagonal Hopf surface} is $\C^2-\{0\}/ \diag(\lambda, \mu)$ for $|\lambda|<1$, $|\mu|<1$. We will denote it by $H(\lambda,\mu)$
\end{definition}


\subsubsection{}
One can show that two Hopf surfaces $H(\lambda, \mu)$ and $H(\lambda',\mu')$ are isomorphic if and only if $\lambda = \lambda'$ and $\mu = \mu'$ (up to a permutation). All automorphisms of $H(\lambda,\mu)$ are induced by a linear diagonal automorphism of $\C^2$.


\subsubsection{}\label{generalities on primary hopfs}
If $\lambda^n = \mu^m$ for some non-zero integers $n$ and $m$ then $H(\lambda,\mu)$ is an elliptic surface with smooth but possibly non-reduced fibers \cite[Ch.\:V, Prop.\:18.2]{Barth_Hulek_Peters_Van_de_Ven}. All elliptic curves in $X$ are fibers, hence their normal bundles are torsion. Such a surface cannot be a compactification of $\Tot(L)$ for a non-torsion line bundle $L$.

When $\lambda^n\ne\mu^m$ for any pair of non-zero integers the surface $H(\lambda,\mu)$ contains exactly two irreducible curves \cite[Ch.\:V, Prop.\:18.2]{Barth_Hulek_Peters_Van_de_Ven}. They are the images of the axes under the quotient map and are isomorphic to $E_\lambda:=\C^*/\lambda$ and $E_\mu:=\C^*/\mu$. They have both square zero. 

The complement to $E_\mu$ in $H(\lambda,\mu)$ is $\C^*\times \C/\diag(\lambda,\mu)$. It is the total space of a line bundle over $E_\lambda$, and the map to $E_\lambda$ is the projection to the first coordinate.


\subsubsection{} Every elliptic curve can be written as $\C/\Z + \Z\tau$ for $\operatorname{Im}(\tau)>0$. The parameter $\tau$ is defined modulo the $\operatorname{SL}_2(\Z)$-action. The map $x\mapsto e^{2\pi ix}$ induces an isomorphism
$$
\C/\Z + \Z\tau \stackrel{\sim}{\to} \C^*/q^\tau.
$$
Here we use the shorthand $q^\tau$ for $e^{2\pi i\tau}$. The total space of every line bundle of degree zero over $\C/\Z + \Z\tau$ is of the form
\begin{equation}\label{total_of_L}
    \mathrm{L}(\tau,A,B) = \mathbb C^2/\sim:\:\:\:\:\:\: (x,y) \sim (x+1, Ay);\: (x,y)\sim (x+\tau, By),
\end{equation}
where $A,B\in U(1)$. These numbers are uniquely defined once we fix $\tau$. Similarly, the total space of a line bundle of degree zero over $E_\lambda = \C^*/\lambda$ can be written as
\begin{equation}
    \mathcal{L}(\lambda,\mu) = \C^*\times \C/\diag(\lambda,\mu),
\end{equation}
where $\lambda,\mu\in \C^*$. 


\subsubsection{}\label{when L(lambda,mu) are isomorphic}The class of $\mathcal{L}(\lambda,\mu)$ in $\operatorname{Pic}^0(E_\lambda)\simeq E_\lambda$ is $\mu\pmod{\lambda^\Z}$ \cite[Sect.\:10, p.\:696]{KodairaII}. In particular, line bundles $\mathcal{L}(\lambda,\mu)$ and $\mathcal{L}(\lambda,\mu')$ are isomorphic if and only if $\mu' = \mu\lambda^n$.


\subsubsection{}
\label{main_biholomorphism}
A line bundle on an elliptic curve can be represented as $\mathrm{L}(\tau,A,B)$ or $\mathcal{L}(\lambda,\mu)$. Let us figure out how to switch between the two descriptions. Write $A = q^u$ for a number $u\in \R$. Then we have a linear isomorphism of line bundles

$$
\alpha_{\tau, u, B}\colon \mathrm{L}(\tau, q^u,B) \stackrel{\sim}\to \mathcal L(q^\tau, q^{-u\tau}B)
$$
given by 
\begin{equation}
\label{from tau to hopf}
        \alpha_{\tau, u, B}(x,y) = (q^{x}, q^{-ux}y).
\end{equation}

Note that $|q^\tau|<1$ and $|q^{-u\tau}B|<1$ if $u$ is chosen to be negative. Given a line bundle $\mathcal L(q^\tau, \mu)$, we can find corresponding $u$ and $B$. Indeed, write $\mu = q^\sigma$ and $B = q^v$. The equation
$$
\sigma = u\tau + v
$$
determines $u$ and $v$ uniquely since $u$ and $v$ are real numbers. Different choice of $\sigma$ does not change $B = q^v$.


\subsubsection{}\label{subsection on sl2 action} The isomorphism (\ref{from tau to hopf}) depends only on the choice of $\tau$ and $u$ such that $q^u=A$. Let us see what happens if we change $\tau$. There is a linear isomorphism of line bundles
$$
\beta_\Gamma \colon \mathrm{L}(\tau, q^u,q^v)\stackrel{\sim}\to\mathrm{L}\left(\frac{k\tau + l}{m\tau +n}, q^{mv+nu}, q^{kv+lu}\right)
$$
for $\begin{pmatrix}k & l\\m & n\end{pmatrix}\in\operatorname{SL}_2(\Z)$. The map (\ref{from tau to hopf}) identifies $\mathrm{L}(\tau, q^u,q^v)$ with $\mathcal L(\lambda', \mu')$, where
\begin{gather*}
   \lambda' = q^{\frac{k\tau+l}{m\tau+n}},\\ 
   \mu' = q^{-(mv+nu+r)\frac{k\tau+l}{m\tau+n} + kv+lu},
\end{gather*}
and $r\in \Z$. Using that $nk-ml = 1$, we can rewrite the exponent in $\mu'$ as
$$
-(mv+nu+r)\frac{k\tau+l}{m\tau+n} + kv+lu = \frac{v-u\tau + r(k\tau + l)}{m\tau + n}.
$$
The elliptic curve in the complement to $\Tot(L)$ in $H(\lambda',\mu')$ is 
$$
E_{\mu'} \simeq \C/\Z + \Z\cdot \left(\frac{v-u\tau + r(k\tau + l)}{m\tau + n}\right)
$$
If $L$ is generic, different choices of $\begin{pmatrix}k & l\\m & n\end{pmatrix}$ and $r$ lead to different elliptic curves $E_{\mu'}$.


\subsubsection{}\label{translations} Consider a translation $t_s\colon x\mapsto x+s$ on the curve $\C/\Z+\Z\tau$. The line bundle $\mathrm{L}(\tau, A,B)$ has degree zero, hence it is isomorphic to its pullback along the translation. The isomorphism can be written explicitly as
\begin{gather*}
    t_s^*\colon \mathrm{L}(\tau, A, B)\to \mathrm L(\tau, A, B)\\ (x,y)\mapsto (x+s,y).
\end{gather*}
When we write $\mathrm{L}(\tau, A,B)$ as $\mathcal L(q^{\tau}, q^{-u\tau}B)$ as in \ref{main_biholomorphism}, the translation turns into the map
\begin{gather*}
 t_s^*\colon \mathcal L(q^{\tau}, q^{-u\tau}B) \to \mathcal L(q^{\tau}, q^{-u\tau}B)\\
 (x,y)\mapsto (q^s, q^{-us}).   
\end{gather*}
This map extends to the automorphism $\diag(q^s,q^{-us})$ of the Hopf surface $H(q^\tau, q^{-u\tau}B)$.


\subsubsection{}\label{automorphisms fixing zero} Consider the automorphism $f$ of $E$ sending $x$ to $-x$. The line bundle $f^*\mathrm{L}(\tau, A, B)$ is isomorphic to $\mathrm{L}(\tau, A^{-1}, B^{-1})$. In general, an automorphism $f$ of $E$ fixing $0$ induces a linear biholomorphism 
$$
f^*\colon \mathrm{L}(\tau, A, B)\stackrel{\sim}\to \mathrm{L}(\tau, A'', B''),
$$
where the pair of numbers $(A'', B'')$ does not necessarily coincide with $(A,B)$.


\subsubsection{}\label{multiplication by a constant} Every automorphism of $\mathcal L(\lambda,\mu)$ as a line bundle is the fiberwise multiplication by a constant $c$. It extends to the automorphism $\diag(1,c)$ of the Hopf surface $H(\lambda, \mu)$.


\begin{proposition}
\label{which hopfs arise}
Let $L$ be a degree-zero non-torsion line bundle on an elliptic curve $E$. Then the set of equivalence classes of open embeddings $\Tot(L)\stackrel{\iota}\to H$ into a primary Hopf surface is countable. Here, we call two embeddings $\iota\colon \Tot(L)\to H$ and $\iota'\colon \Tot(L)\to H'$ equivalent if there is an automorphism $g$ of $\Tot(L)$ and a biholomorphism $h\colon H\to H'$ such that the following diagram commutes:

\begin{center}
\begin{tikzcd}
\operatorname{Tot}(L) \arrow[r, "\iota", hook] \arrow[d, "g"] & H \arrow[d, "h"] \\
\operatorname{Tot}(L) \arrow[r, "\iota'", hook]               & H'              
\end{tikzcd}
\end{center}

\end{proposition}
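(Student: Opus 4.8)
The plan is to record the equivalence class of an embedding $\iota\colon\Tot(L)\hookrightarrow H$ into a primary Hopf surface by the isomorphism class of the pair $(H,C)$ with $C:=H\setminus\iota(\Tot(L))$, and then to bound the number of such pairs. That the equivalence class determines $(H,C)$ up to isomorphism, and conversely, is a direct unwinding of the definition of equivalence in the statement: a commuting square with top row $\iota$ and bottom row $\iota'$ forces the biholomorphism $h$ to carry $C$ onto $C'$, and conversely, given an isomorphism $h\colon(H,C)\stackrel{\sim}{\to}(H',C')$, the two biholomorphisms $h\circ\iota,\iota'\colon\Tot(L)\to H'\setminus C'$ differ by an automorphism of $\Tot(L)$, which supplies the missing vertical arrow. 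So it suffices to show that there are countably many isomorphism classes of pairs $(H,C)$ with $H$ a primary Hopf surface, $C\subset H$ a curve, and $H\setminus C\cong\Tot(L)$.

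To pin down such a pair, I would first note that $C$ has no isolated points, so that $(H,C)$ is an analytic compactification: if $p\in C$ were isolated, a standard analysis of ends would show that $\iota$ identifies a neighbourhood of the (unique) end of $\Tot(L)$ with a simply connected punctured ball around $p$; but a neighbourhood of that end deformation retracts onto the unit circle bundle of $L$, whose fundamental group surjects onto $\pi_1(E)\cong\Z^2$ and is therefore nontrivial. Since $b_2(H)=0$, no curve on $H$ is exceptional, so $(H,C)$ is minimal and Theorem \ref{enoki-classification} applies; as $H$ is non-K\"ahler it is not a $\P^1$-bundle over an elliptic curve, so we land in case (2): $C$ is an elliptic curve and $H$ a Hopf surface. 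By \ref{subsubsection primary Hopfs non linear} and \ref{generalities on primary hopfs} --- this is where non-torsionality of $L$ enters --- $H$ must be a diagonal Hopf surface $H(\lambda,\mu)$ with $\lambda^n\ne\mu^m$ for all nonzero $n,m$, whose only curves are $E_\lambda$ and $E_\mu$. Hence $C$ is one of them, and after the harmless swap $\lambda\leftrightarrow\mu$ (note $H(\lambda,\mu)\cong H(\mu,\lambda)$) we may assume $C=E_\mu$, so $H\setminus C=\mathcal L(\lambda,\mu)$.

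It remains to extract discrete data from $\Tot(L)\cong\mathcal L(\lambda,\mu)$ and to count. Since $L$ is non-torsion, $H^0(E,L^{\o i})=0$ for all $i\ge 1$, so every compact curve in $\Tot(L)$ is the zero section: a $d$-section would be cut out along the fibres by a monic polynomial whose coefficients are global sections of the $L^{\o i}$, all of which vanish; the same holds in $\mathcal L(\lambda,\mu)$. Thus any biholomorphism $\Tot(L)\stackrel{\sim}{\to}\mathcal L(\lambda,\mu)$ matches zero sections and their normal bundles, giving an isomorphism of pairs $(E,L)\cong(E_\lambda,\mathcal L(\lambda,\mu))$ of an elliptic curve with a degree-zero line bundle on it. Now: the values of $\lambda$ with $|\lambda|<1$ and $E_\lambda\cong E$ form a countable set (an elliptic curve has only countably many presentations $\C^*/\lambda^{\Z}$, coming from the $\operatorname{SL}_2(\Z)$-orbit of a period); for each such $\lambda$, matching the class of $\mathcal L(\lambda,\mu)$ in $\operatorname{Pic}^0(E_\lambda)\cong E_\lambda$ with that of $L$ under some isomorphism $E_\lambda\cong E$ determines $\mu$ modulo $\lambda^{\Z}$ up to the finite group $\operatorname{Aut}(E,0)$, by \ref{when L(lambda,mu) are isomorphic}, and each coset $\mu\lambda^{\Z}$ is itself countable. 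Hence there are countably many admissible $(\lambda,\mu)$, so countably many pairs $(H(\lambda,\mu),E_\mu)$, and the proposition follows from the first paragraph.

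The step I expect to be the main obstacle is the determination of the pair $(H,C)$ in the second paragraph: that $C$ is genuinely an analytic curve with no isolated points rests on the end structure of $\Tot(L)$, and narrowing $H$ down to a multiplicatively independent diagonal Hopf surface combines Theorem \ref{enoki-classification} with the curve analysis of \ref{subsubsection primary Hopfs non linear} and \ref{generalities on primary hopfs}. By comparison, the uniqueness of the compact curve in $\Tot(L)$ and the final countability argument are routine.
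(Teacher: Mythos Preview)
Your argument is correct and takes a genuinely different route from the paper's. The paper proceeds by explicitly factoring an arbitrary embedding $\iota$ as a composition
\[
\mathrm{L}(\tau,A,B)\xrightarrow{f^*}\mathrm{L}(\tau,A'',B'')\xrightarrow{\beta_\Gamma}\mathrm{L}(\tau',A',B')\xrightarrow{\alpha_{\tau',u',B'}}H(\lambda,\mu),
\]
reducing along the way modulo translations and fibrewise scalings (which extend to automorphisms of $H$), and invoking the forward-referenced Lemma~\ref{open embeddings} to show that any biholomorphism of $\mathrm{L}(\tau,A,B)$ fixing $E$ pointwise is a scaling. The countable output is then the set of triples $(f,\Gamma,u')$. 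By contrast, you bypass all of this by first observing that the equivalence class of $\iota$ is exactly the isomorphism class of the pair $(H,C)$, and then counting such pairs: Enoki's theorem plus the curve inventory of diagonal Hopf surfaces pin $H$ and $C$ down to $(H(\lambda,\mu),E_\mu)$, and the ``unique compact curve'' argument (which replaces the paper's appeal to Lemma~\ref{open embeddings}) forces $(E_\lambda,\mathcal L(\lambda,\mu))\cong(E,L)$, leaving only countably many $(\lambda,\mu)$. Your approach is cleaner and self-contained at this point in the paper; the paper's approach has the compensating virtue of producing an explicit normal form for the embeddings, which is what feeds into the computation of \ref{subsection on sl2 action}. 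One shared soft spot: both arguments take for granted that the complement $C=H\setminus\iota(\Tot(L))$ is an analytic curve (your end/$\pi_1$ remark rules out isolated points but does not by itself establish analyticity); the paper simply asserts that the image is $H(\lambda,\mu)-E_\mu$, citing \ref{subsubsection primary Hopfs non linear} and \ref{generalities on primary hopfs}, so you are at the same level of rigor here.
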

\begin{proof}
Suppose $E=X/\Z+\Z\tau$ and $L$ is such that $\Tot(L)=\mathrm{L}(\tau,A,B)$. Consider an open embedding $\iota\colon \mathrm{L}(\tau, A, V)\to H$. By \ref{subsubsection primary Hopfs non linear} and \ref{generalities on primary hopfs}, the surface $H$ must be of the form $H(\lambda,\mu)$, and $\mathrm{L}(\tau,A,B)$ is mapped onto $H(\lambda,\mu)-E_\mu\simeq \mathcal L(\lambda,\mu)$. We saw in \ref{main_biholomorphism} that there exists a biholomorphism $$\alpha_{\tau',u',B'}\colon \mathrm{L}(\tau',A',B')\stackrel{\sim}\to \mathcal L(\lambda,\mu)$$
for some $\tau'\in\C$ such that $\mathrm{Im}~\tau'>0$ and $A',B'\in U(1)$.

The elliptic curves $E = \C/\Z+\Z\tau$ and $\C/\Z+\Z\tau'$ must be isomorphic, hence $\tau = \Gamma\cdot\tau'$ for a matrix $\Gamma\in\operatorname{SL}_2(\Z)$. In particular, the set of possible $\tau'$ is countable. The line bundle $\mathrm{L}(\tau', A', B')$ is linearly isomorphic to $\mathrm{L}(\tau, A'',B'')$ for some $A'', B''\in U(1)$ (\ref{subsection on sl2 action}). Therefore, the embedding $\mathrm{L}(\tau, A,B)\stackrel{\iota}\to H(\lambda,\mu)$ is the composition of the following maps:

\begin{center}\begin{tikzcd}
{\mathrm{L}(\tau, A,B)} \arrow[r, "\varphi"] & {\mathrm{L}(\tau, A'',B'')} \arrow[r, "\beta_\Gamma"] & {\mathrm L(\tau, A', B')} \arrow[r, "{\alpha_{\tau',u',B'}}"] & {\mathcal L(\lambda,\mu)} \arrow[r, hook] & {H(\lambda,\mu)},
\end{tikzcd}\end{center}

where $\mathrm{L}(\tau, A,B)\stackrel{\phi}\to \mathrm{L}(\tau, A'',B'')$ is some biholomorphism. 

\hfill

It remains to classify biholomorphisms $\mathrm{L}(\tau, A,B)\stackrel{\phi}\to \mathrm{L}(\tau, A'',B'')$ modulo equivalence. Let $f$ be the restriction of $\phi$ to the zero section $E\subset \mathrm{L}(\tau, A,B)$. It is a composition of a translation and an automorphism preserving zero. Two biholomorphisms $\mathrm{L}(\tau, A,B)\to\mathrm{L}(\tau, A'',B'')$ that differ by a translation are equivalent because a translation extends to an automorphism of $H(\lambda,\mu)$ (\ref{translations}). Hence, we may assume that $f\colon E\to E$ preserves zero. We have an isomorphism of line bundles $\mathrm{L}(\tau,A,B)\simeq f^*\mathrm{L}(\tau,A'',B'')$ as in \ref{automorphisms fixing zero}. The set of automorphisms fixing zero is finite, hence the number of pairs $(A'',B'')$ such that $\mathrm{L}(\tau, A,B)$ is isomorphic to $\mathrm{L}(\tau, A'',B'')$ is also finite. We have shown that every embedding $\mathrm{L}(\tau, A,B)\stackrel{\iota}\to H(\lambda,\mu)$ is equivalent to the composition 

\begin{center}
\begin{tikzcd}{L(\tau, A,B)} \arrow[r, "\psi"] & {\mathrm{L}(\tau, A,B)} \arrow[r, "f^*"] & {\mathrm{L}(\tau, A'',B'')} \arrow[r, "\beta_\Gamma"] & {\mathrm L(\tau, A', B')} \arrow[r, "{\alpha_{\tau',u',B'}}", hook] & {H(\lambda,\mu)},\end{tikzcd}\end{center}

where $\psi$ is a biholomorphism of $\mathrm{L}(\tau, A,B)$ identical on $E$. We will see later in Lemma \ref{open embeddings} that every such biholomorphism is multiplication by a constant. It extends to an automorphism of $H(\lambda,\mu)$ by \ref{multiplication by a constant}. Therefore, every embedding is equivalent to the composition of morphisms
\begin{center}
\begin{tikzcd}
{\mathrm{L}(\tau, A,B)} \arrow[r, "f^*"] & {\mathrm{L}(\tau, A'',B'')} \arrow[r, "\beta_\Gamma"] & {\mathrm L(\tau, A', B')} \arrow[r, "{\alpha_{\tau',u',B'}}", hook] & {H(\lambda,\mu)}
\end{tikzcd}
\end{center}
Such embedding depends only on the choice of an automorphism $f\colon E\to E$ fixing zero (finite number of choices), $\tau'$ in the $\operatorname{SL}_2(\Z)$-orbit of $\tau$ (countable number of choices) and $u'<0$ such that $A'=q^{u'}$ (countable number of choices). Our claim follows.
\end{proof}



\subsection{Secondary Hopf surfaces}


\subsubsection{}\label{secondary-with-non-torsion}
We have just classified primary Hopf surfaces that arise as compactifications of $\Tot(L)$. We will focus in this subsection on compactifications that are secondary Hopf surfaces. Every secondary Hopf surface is a quotient of a primary Hopf surface by a free action of a finite group \cite[Sect.10, p.695]{KodairaII}. Assume that $S$ is a secondary Hopf surface containing an elliptic curve with non-torsion normal bundle. Then $S$ is a quotient of a diagonal Hopf surface $H(\lambda,\mu)$ by an operator $\diag(q^{1/n},q^{r/n})$, where $q^{1/n}$ and $q^{r/n}$ are primitive $n$-roots of unity \cite[Sect.\:10, Thm.\:32]{KodairaII}.

\begin{theorem}
\label{embeddings into all hopfs}
    Let $L$ be a degree-zero non-torsion line bundle on an elliptic curve $E$. Then the set of equivalence classes of open embeddings $\Tot(L)\stackrel{\iota}\to H$ of $\Tot(L)$ into a Hopf surface (primary or secondary) is countable.
\end{theorem}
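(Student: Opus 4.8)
The strategy is to reduce the secondary case to the primary case already handled in Proposition \ref{which hopfs arise}. By \ref{secondary-with-non-torsion}, any secondary Hopf surface $S$ containing an elliptic curve with non-torsion normal bundle is the quotient of a diagonal Hopf surface $H = H(\lambda,\mu)$ by a cyclic group $G = \langle g\rangle$ of order $n$ acting by $g = \diag(q^{1/n}, q^{r/n})$ with $q^{1/n}, q^{r/n}$ primitive $n$-th roots of unity. So given an open embedding $\iota\colon \Tot(L)\hookrightarrow S$, I would first pass to the $n$-fold cover: pull back $\iota$ along $\pi\colon H\to S$ to get an open embedding of the preimage $\pi^{-1}(\iota(\Tot(L)))$ into $H$. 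The key point is to identify this preimage. The complement $S - \iota(\Tot(L))$ is an elliptic curve $C$; its preimage $\pi^{-1}(C)$ is a (possibly disconnected, possibly multiple) curve in $H$, and since $H(\lambda,\mu)$ contains only the two curves $E_\lambda, E_\mu$ (using $\lambda^a\ne\mu^b$, which holds because $L$ is non-torsion), the preimage $\pi^{-1}(C)$ must be one of them, say $E_\mu$. Hence $\pi^{-1}(\iota(\Tot(L))) = H - E_\mu \cong \mathcal{L}(\lambda,\mu) \cong \Tot(L')$ for the degree-zero line bundle $L'$ on $E_\lambda$, and this total space carries a free $G$-action whose quotient is $\iota(\Tot(L))\cong\Tot(L)$.

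\textbf{Main steps.} (1) Show that the induced covering $\Tot(L') \to \Tot(L)$ is a free action of $G\cong\Z/n$ on $\Tot(L')$ commuting with the bundle projection up to an isogeny of the base elliptic curves; concretely, $g$ acts on $\mathcal{L}(\lambda,\mu) = \C^*\times\C/\diag(\lambda,\mu)$ as $\diag(q^{1/n}, q^{r/n})$, covering the order-$n$ translation $x\mapsto q^{1/n}x$ on $E_\lambda = \C^*/\lambda^{\Z}$ and sending $\Tot(L)$ to the quotient line bundle over $E_\lambda/(q^{1/n})^{\Z} \cong \C^*/\lambda^{1/n}$. (2) Conversely, every open embedding $\Tot(L)\hookrightarrow S$ into a secondary Hopf surface arises this way, so the set of such embeddings (up to equivalence) is controlled by: the primary cover $H$ containing $\Tot(L')$, the choice of which curve is removed, and the $G$-action — equivalently, by an open embedding of an $n$-fold \'etale cover $\Tot(L')$ of $\Tot(L)$ into a \emph{primary} Hopf surface, together with the finite data $(n, r)$. (3) For fixed $n$, there are only finitely many line bundles $L'$ on finitely many degree-$n$ isogenous covers $E'$ of $E$ such that $\Tot(L')/G \cong \Tot(L)$; by Proposition \ref{which hopfs arise} each such $\Tot(L')$ has only countably many embeddings into primary Hopf surfaces; and $n$ ranges over a countable set. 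A countable union of (finite $\times$ countable) sets is countable, which gives the claim.

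\textbf{Anticipated obstacle.} The delicate point is step (1)--(2): making precise the correspondence between $G$-equivariant open embeddings $\Tot(L')\hookrightarrow H$ (primary) and open embeddings $\Tot(L)\hookrightarrow S$ (secondary), and checking that $L'$ is genuinely non-torsion on $E_\lambda$ so that Proposition \ref{which hopfs arise} applies — this follows since a torsion line bundle pulled back to or pushed down along an isogeny stays torsion, and $L$ is assumed non-torsion, but one must verify the normal bundle of the removed curve $C$ in $S$ pulls back to the non-torsion normal bundle of $E_\mu$ in $H$. One also has to confirm that the equivalence relation on embeddings downstairs lifts to the equivalence relation upstairs (automorphisms of $\Tot(L)$ and biholomorphisms $S\to S'$ lift, after passing to covers, to $G$-equivariant ones), so that ``countably many upstairs'' really does bound ``countably many downstairs''. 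Once the covering-space bookkeeping is set up carefully, the counting is routine.
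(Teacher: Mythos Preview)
Your proposal is correct and follows essentially the same route as the paper: reduce the secondary case to the primary one by passing to the $n$-fold cover $H\to S$, identify the preimage of $\Tot(L)$ as $\Tot(f^*L)$ for an isogeny $f\colon E'\to E$ of degree $n$, and then invoke Proposition \ref{which hopfs arise} together with the countability of the auxiliary choices $(n,r,f)$. The paper's argument is terser and does not spell out the equivalence-relation bookkeeping or the non-torsion check you flag, but the strategy is identical.
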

\begin{proof}

We know the claim for embeddings into primary Hopf surfaces (Proposition \ref{which hopfs arise}), hence it is enough to assume that $H$ is a secondary Hopf surface. By \ref{secondary-with-non-torsion}, every secondary Hopf surface compactifying $\Tot(L)$ must be of the form $H/\diag(q^{1/n},q^{r/n})$ for $(r,n)=1$, where $H$ is a diagonal Hopf surface. Given an embedding of $\Tot(L)$ into $H/\diag(q^{1/n},q^{r/n})$, we can construct an embedding of $\Tot(f^*L)$ to $H$, where $f\colon E'\to E$ is a quotient by an $n$-torsion element of $E'$. The set of possible $f$ is countable, and so is the set of equivalence classes of embeddings of $\Tot(f^*L)$ into a primary Hopf surface, hence the claim.
\end{proof}

\subsection{Hopf duality and analytic cobordance}

\subsubsection{} Let $L \to E$ be a non-trivial degree-zero line bundle. The ruled surface $\P(\O_E \oplus L)$ contains two sections: $\P(\O)$ and $\P(L)$. The map $\Tot(L) \to \P(\O\oplus L)$, $v \mapsto [1 : v]$ defines a biholomorphism between $\Tot(L)$ and $\P(\O\oplus L) - \P(L)$. Similarly, $\P(\O\oplus L) - \P(\O)$ is biholomorphic to $\Tot(L^{-1})$. Hence the surface $\P(\O_E\oplus L)$ is a compactification of total spaces of dual line bundles: $\Tot(L)$ and $\Tot(L^{-1})$. The analogous property of the diagonal primary Hopf surfaces motivates the following definition.

\begin{definition}
\label{def of duals and cobordant}
    Let $L \to E$ and $L' \to E'$ be two degree-zero line bundles on elliptic curves. They are called 
    \begin{itemize}
    \item {\bf Hopf dual} if there exists a Hopf surface $H$ with two elliptic curves $E_\lambda$ and $E_\mu$ such that $H - E_\mu\simeq \Tot(L)$ and $H - E_\lambda\simeq \Tot(L')$;
    \item {\bf analytically cobordant} if $L^{-1}\to E$ and $L'\to E'$ are either Hopf dual, or $E=E'$ and $L=L'$.
    \end{itemize}
\end{definition}

By Proposition \ref{which hopfs arise}, any non-torsion line bundle admits a countable number of Hopf dual and analytically cobordant line bundles. The base curves of analytically cobordant line bundles are in general non-isomorphic (\ref{subsection on sl2 action}). 

\subsubsection{}\label{Hopf-duals}
    Let $L \to E$ and $L' \to E'$ be Hopf dual line bundles. Then the spaces $\Tot(L) - 0_L$ and $\Tot(L') - 0_{L'}$ are biholomorphic, where $0_{\Xi} \subset \Tot(\Xi)$ denotes the zero section. Indeed, both spaces are biholomorphic to $H(\lambda,\mu) - \left(E_\lambda\cup E_\mu\right)$. The biholomorphism sends the neighborhood of the zero section in one to the neighborhood of the infinity section in the other and vice versa. Similarly, let $L \to E$ and $L' \to E'$ be analytically cobordant line bundles. Then $\Tot(L) - 0_L$ and $\Tot(L') - 0_{L'}$ are biholomorphic so that a neighbourhood of the zero section in one maps to the neighbourhood of the zero section in the other.

\subsubsection{} Of course, if $E \not\cong E'$, the biholomorphism $\Tot(L) - 0_L \simeq \Tot(L') - 0_{L'}$ cannot be extended to the zero section. To internalize it, let us understand the behavior of fibers of the bundle under this biholomorphism. Let $H = H(\lambda,\mu)$ be a Hopf surface. The fibers of $H - E_\mu = \mathcal L(\lambda,\mu) \to E_\lambda$ are vertical lines. When we project $\C^2-\{0\}$ to $H(\mu,\lambda)$, we identify vertical lines $x = c, x = ac, x = a^2c$ etc. Hence, the image of a vertical line in $H(\mu,\lambda)$ is a non-closed horn-like subspace with the curve $E_\mu$ in its closure.

\begin{proposition}
\label{isomorphism and cobordance}
    Let $L\to E$ and $L'\to E'$ be two line bundles on elliptic curves. Then
    \begin{enumerate}
        \item $L$ and $L'$ are Hopf dual if and only if there exists a biholomorphism $\Tot(L) - 0_L$ and $\Tot(L') - 0_{L'}$ sending a neighborhood of $0_L$ to a neighborhood of infinity in $\Tot(L')$.
        \item $L$ and $L'$ are analytically cobordant if and only if there exists a biholomorphism $\Tot(L) - 0_L$ and $\Tot(L') - 0_{L'}$ sending a neighborhood of $0_L$ to a neighborhood of $0_{L'}$.
    \end{enumerate}
\end{proposition}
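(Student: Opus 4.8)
The plan is to read off the ``only if'' implications from \ref{Hopf-duals}, and to establish the ``if'' implications by \emph{reversing a gluing}: from a biholomorphism of the two punctured total spaces one builds a compact surface by cutting and pasting $\Tot(L)$ and $\Tot(L')$ along it, and then identifies that surface via Enoki's Theorem \ref{enoki-classification}. (Throughout I use that $\Tot(L)-0_L$ and $\Tot(L^{-1})-0_{L^{-1}}$ are canonically the same surface --- both equal $\P(\O_E\oplus L)$ with its two sections removed --- with the zero end of one being the infinity end of the other.) The ``only if'' directions are exactly \ref{Hopf-duals}: if $L,L'$ are Hopf dual via a Hopf surface $H$ with elliptic curves $E_\lambda,E_\mu$, then $0_L=E_\lambda$ and $0_{L'}=E_\mu$, both $\Tot(L)-0_L$ and $\Tot(L')-0_{L'}$ become $H-(E_\lambda\cup E_\mu)$, and a punctured neighbourhood of $0_L=E_\lambda$ is a neighbourhood of the infinity end of $\Tot(L')=H-E_\lambda$, which is (1); for (2) either $L=L'$ (identity) or $L^{-1},L'$ are Hopf dual, and applying (1) to $(L^{-1},L')$ and re-reading through the canonical identification above turns the zero-to-infinity matching into a zero-to-zero matching.

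For the ``if'' direction of (1), let $\Phi\colon\Tot(L)-0_L\to\Tot(L')-0_{L'}$ carry a neighbourhood of $0_L$ to a neighbourhood of infinity of $\Tot(L')$. The surface $\Tot(L)-0_L$ is a $\C^{*}$-bundle over an elliptic curve and has exactly two ends, and the hypothesis says precisely that $\Phi$ carries the end that gets filled in when one re-attaches $0_L$ over to the end that stays non-compact in $\Tot(L')$. Hence the pushout $S:=\Tot(L)\sqcup_{\Phi}\Tot(L')$ fills both ends of this collar and is a \emph{compact} complex surface (Hausdorffness is clear: small neighbourhoods of $0_L$ and of $0_{L'}$ lie over opposite ends), containing the disjoint elliptic curves $0_L,0_{L'}$, with $S-0_{L'}\simeq\Tot(L)$ and $S-0_L\simeq\Tot(L')$. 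Since $(S,0_L)$ is a minimal compactification of $\Tot(L')$, Theorem \ref{enoki-classification} forces $S$ to be either a Hopf surface or $\P(\O_{E'}\oplus L')$.

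In the first branch, assuming (as we may) that $L$ and $L'$ are non-torsion, the curves $0_L,0_{L'}$ have non-torsion normal bundles, and \ref{subsubsection primary Hopfs non linear}--\ref{secondary-with-non-torsion} then pin $S$ down as a diagonal Hopf surface $H(\lambda,\mu)$ with $\lambda^{n}\neq\mu^{m}$ --- or a secondary quotient of one --- whose only curves are (the images of) $E_\lambda$ and $E_\mu$; thus $\{0_L,0_{L'}\}=\{E_\lambda,E_\mu\}$, and $S-0_{L'}\simeq\Tot(L)$, $S-0_L\simeq\Tot(L')$ exhibit $L$ and $L'$ as Hopf dual with $H=S$. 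The branch $S\simeq\P(\O_{E'}\oplus L')$ is where I expect the real difficulty: there $0_L$ is a smooth elliptic curve of self-intersection zero on a geometrically ruled surface over $E'$, so it maps to $E'$ without ramification (Riemann--Hurwitz), and a short computation with numerical classes on $\P(\O_{E'}\oplus L')$ (using $L'$ non-torsion) shows $0_L$ must be one of the two sections; consequently $E\simeq E'$ and $\Tot(L)=S-0_L$ is biholomorphic to $\Tot(L')$ or $\Tot((L')^{-1})$. This ``ruled'' alternative can therefore occur only when $E\simeq E'$, and disappears as soon as $E\not\simeq E'$ (in particular in the applications of the paper), leaving only the Hopf branch; disposing of this degenerate possibility under the operative hypotheses is the step I expect to cost the most.

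Finally, the ``if'' direction of (2) reduces to (1): a biholomorphism $\Tot(L)-0_L\to\Tot(L')-0_{L'}$ carrying a neighbourhood of $0_L$ to a neighbourhood of $0_{L'}$ is, after the canonical identification $\Tot(L)-0_L=\Tot(L^{-1})-0_{L^{-1}}$ that swaps the zero and infinity ends, a biholomorphism $\Tot(L^{-1})-0_{L^{-1}}\to\Tot(L')-0_{L'}$ carrying a neighbourhood of $0_{L^{-1}}$ to a neighbourhood of infinity of $\Tot(L')$; by (1) this makes $L^{-1}$ and $L'$ Hopf dual, i.e. $L$ and $L'$ analytically cobordant (with the ruled exceptional case again collapsing to a biholomorphism $\Tot(L^{-1})\simeq\Tot(L')$). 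So the one genuinely delicate point is the ruled branch of Enoki's dichotomy in part (1).
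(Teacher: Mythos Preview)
Your approach is essentially the same as the paper's: for the ``if'' direction of (1) you glue $\Tot(L)$ and $\Tot(L')$ along the given biholomorphism to obtain a compact surface compactifying $\Tot(L)$, then invoke Enoki's Theorem \ref{enoki-classification}; and you reduce (2) to (1) via the identification $\Tot(L)-0_L\simeq\Tot(L^{-1})-0_{L^{-1}}$.

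The one noteworthy difference is that you are \emph{more} careful than the paper. The paper's proof simply says ``By Enoki's theorem, it is either a ruled or a Hopf surface, hence the claim,'' without addressing what happens in the ruled branch. You correctly observe that if the glued surface $S$ is $\P(\O_{E'}\oplus L')$, then (for $L'$ non-torsion) $0_L$ must be one of the two sections, forcing $E\simeq E'$ and $L'\simeq L^{\pm1}$; this is indeed a degenerate case that the definition of ``Hopf dual'' does not literally cover. Your diagnosis that this edge case ``disappears as soon as $E\not\simeq E'$'' and is irrelevant to the applications is accurate. So your writeup is not only aligned with the paper's argument but also patches a small expository gap in it; the extra work you flag as ``the real difficulty'' is minor and you have already sketched it correctly.
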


\begin{proof}
    The second statement follows immediately from the first, so we will only prove the first one. We saw in (\ref{Hopf-duals}) that Hopf dual line bundles satisfy the condition of the theorem. Conversely, suppose there is a biholomorphism $\Tot(L) - 0_L$ and $\Tot(L') - 0_{L'}$ as in the proposition. Let us glue $\Tot(L)$ and $\Tot(L')$ by this isomorphism. The result is a compact Hausdorff surface compactifying $\Tot(L)$. By Enoki's theorem, it is either a ruled or a Hopf surface, hence the claim.
\end{proof}

\subsubsection{}
By Proposition \ref{isomorphism and cobordance}, analytic cobordance is an equivalence relation on the set of pairs $(E,L)$ where $E$ is an elliptic curve and $L$ is a line bundle on $E$. However, if $L$ is analytically cobordant to $L'$ and $L''$ through primary Hopf surfaces, then $L'$ and $L''$ can be analytically cobordant through a secondary Hopf surface.

\subsubsection{} The surface $M = \Tot(L)$ carries a holomorphic symplectic form $\sigma$ with a simple pole at the zero section. Indeed, $$K_M = \pi^*K_E \otimes K_{M/E} = K_{M/E} = \pi^*L^*.$$ 
The second isomorphism holds because $K_E$ is trivial. The pullback $\pi^*L$ has the tautological section, which vanishes on $0_L$; thus $K_M = \pi^*L^*$ has a nowhere zero section with a simple pole along $0_L$.

\subsubsection{} \label{cobord-symplect} Consider non-torsion analytically cobordant line bundles $L$ and $L'$. Let $\sigma$ and $\sigma'$ be the holomorphic symplectic forms on $\Tot(L) - 0_L$ and $\Tot(L') - 0_{L'}$ The pullback of $\sigma'$ under the holomophic isomorphism $f\colon \Tot(L) - 0_L \to \Tot(L') - 0_{L'}$ equals $c\sigma$ for a non-zero constant $c$. Indeed, $f^*\sigma' = g\sigma$ for a holomorphic function $g$ on $\Tot(L) - 0_L$. By \cite[Lemma 2.2]{Koike_Uehara_non_proj}, all holomorphic functions on $\Tot(L) - 0_L$ are constant.


\subsection{Hopf transforms}

\subsubsection{} A tubular neighborhood of a complex submanifold is in general not biholomorphic to a neighborhood of the zero section of its normal bundle. For example, a smooth cubic $C \subset \P^2$ can be deformed to a non-isomorphic curve, whereas all deformations of the zero section of $\Tot(\nu_{C/\P^2})$ are isomorphic to $C$. Therefore, neighborhoods of $C$ in $\P^2$ and in $\Tot(\nu_{C/\P^2})$ cannot be biholomorphic. However, analogues of the tubular neighbourhood theorem are known in certain situations.

\begin{definition}\label{diophantine-definition}
    Let $L \to E$ be a degree-zero line bundle on an elliptic curve, and $d$ an translation-invariant metric on the Picard variety $\mathrm{Pic}^0(E)$. Suppose that $$-\log d(\O_E, L^{\otimes n}) = O(\log n).$$ Then $L$ is called {\bf Diophantine}.
\end{definition}

The Diophantine property does not depend on the choice of the metric $d$. The set of Diophantine line bundles on $E$ is the complement to a measure-zero set in $\mathrm{Pic^0}(E)$.

\begin{theorem}[Arnold--Ueda theorem {\cite[4.3]{Arnold}} {\cite[Th. 3]{Ueda}}]
\label{Arnold}
    Let $S$ be a complex surface and $E \subset S$ an elliptic curve. If the normal bundle $\nu_{E/S}$ is non-torsion and of degree zero, then the formal neighbourhoods of $E$ in $S$ and in $\Tot(\nu_{E/S})$ are isomorphic. If $\nu_{E/S}$ is Diophantine, then this isomorphism extends to a biholomorphism of an analytic neighborhood of $E$ in $S$ with a neighborhood of $E$ in $\Tot(\nu_{E/S})$.
\end{theorem}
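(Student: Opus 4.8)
The plan is to treat the formal assertion and the convergent assertion separately, since they are proved by quite different means. For the formal isomorphism I would proceed by infinitesimal induction. Set $N=\nu_{E/S}$, let $\mathcal I\subset\O_S$ be the ideal of $E$, and let $E_m$ (resp.\ $\hat E_m$) be the $m$-th infinitesimal neighbourhood of $E$ in $S$ (resp.\ of the zero section $0_E$ in $\Tot(N)$), so that $\mathcal I^m/\mathcal I^{m+1}\cong N^{\otimes(-m)}$ and likewise below. The first-order neighbourhood of a smooth curve in a surface is always split, so $E_1\cong\hat E_1$ over $\mathrm{id}_E$, which starts the induction. Given an isomorphism $\phi_m\colon E_m\stackrel{\sim}{\to}\hat E_m$ over $\mathrm{id}_E$ with $m\ge 1$, extending it is the same as lifting the morphism $\phi_m\colon E_m\to\Tot(N)$ across the square-zero extension $E_m\hookrightarrow E_{m+1}$ with ideal $N^{\otimes(-m-1)}$; since $\Tot(N)$ is smooth, the obstruction lies in $H^1\!\big(E,\,\Theta_{\Tot(N)}|_{0_E}\otimes N^{\otimes(-m-1)}\big)$, which --- using the splitting $\Theta_{\Tot(N)}|_{0_E}\cong\Theta_E\oplus N$ and the triviality of $\Theta_E$ --- is $H^1(E,N^{\otimes(-m-1)})\oplus H^1(E,N^{\otimes(-m)})$; any such lift is then automatically an isomorphism onto $\hat E_{m+1}$. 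Now the hypotheses enter: $\deg N=0$ forces $\chi(E,N^{\otimes k})=0$ on the genus-one curve $E$, and $N$ non-torsion forces $N^{\otimes k}$ to be a non-trivial degree-zero bundle for $k\ne 0$, so that $h^0(E,N^{\otimes k})=h^1(E,N^{\otimes k})=0$. Hence all obstruction groups vanish, the induction runs to all orders, and $\varprojlim\phi_m$ is the claimed isomorphism of formal neighbourhoods. (In Ueda's language, all his obstruction classes $u_m(E,S)\in H^1(E,N^{\otimes(-m)})$ vanish for free.)

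For the analytic statement I would upgrade this to a convergent construction by a small-divisor argument, and this is where the Diophantine hypothesis is used. Cover $E$ by finitely many polydiscs trivialising $N$ and carrying fibre coordinates for $S$ near $E$; up to shrinking, $S$ is reconstructed from $\Tot(N)$ by a cocycle of transition biholomorphisms that agree with the linear ($\Tot(N)$-)transitions to first order in the fibre variable, and one removes the remaining non-linear terms one homogeneous fibre-degree at a time by holomorphic changes of coordinate near $E$. At degree $m$ this is a \v{C}ech problem valued in $N^{\otimes(-m)}$ (or in $\Theta_E\otimes N^{\otimes(-m)}\cong N^{\otimes(-m)}$ for the base coordinate), solvable because $H^1(E,N^{\otimes(-m)})=0$, followed by inverting the coboundary operator of that line bundle; the norm of this inverse is of order $d(\O_E,N^{\otimes m})^{-1}$, since the operator degenerates precisely when $N^{\otimes m}\cong\O_E$. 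The Diophantine bound $-\log d(\O_E,N^{\otimes m})=O(\log m)$ makes those norms grow only polynomially in $m$, which is exactly what is needed to dominate the iteration by a convergent majorant series on a slightly smaller neighbourhood of $E$; its limit is a biholomorphism of a neighbourhood of $E$ in $S$ onto a neighbourhood of $E$ in $\Tot(N)$.

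The formal step is essentially free once the vanishing $H^1(E,N^{\otimes(-m)})=0$ is in hand. The genuine difficulty --- and the main obstacle --- is the convergence in the last step: one must track simultaneously the size of the successive corrections, the shrinking of the domain, and the polynomial blow-up of the solution operators caused by the small divisors $d(\O_E,N^{\otimes m})^{-1}$, and verify that a direct majorant estimate still converges (a Newton/KAM acceleration is not needed here, since the loss is only polynomial rather than exponential). This is precisely what the theorems of Ueda \cite{Ueda} and Arnold \cite{Arnold} supply, and it is those results that we invoke.
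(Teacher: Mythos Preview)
The paper does not prove this theorem: it is quoted from the literature, with the formal statement attributed to Arnold and the analytic one to Ueda, and no argument is given beyond the citations. Your sketch is a faithful outline of the strategy in those references---the order-by-order formal linearisation governed by the vanishing $H^1(E,N^{\otimes k})=0$ for $k\neq 0$, followed by the small-divisor majorant estimate under the Diophantine hypothesis---so there is nothing in the paper to compare against; you have in effect reproduced the cited proofs rather than anything the present paper supplies.
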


\subsubsection{} The explicit description in \cite[4.3]{Arnold} implies that if $L\to E$ and $L'\to E'$ are analytically cobordant through a primary Hopf surface and $L$ is Diophantine, then so is $L'$. It follows easily from this and \ref{secondary-with-non-torsion} that the Diophantine property is preserved by any analytic cobordism.


\subsubsection{} Arnold--Ueda theorem fails in the case of torsion line bundles. A counterexample is a fiber $F$ of a non-isotrivial elliptic fibraton. It has a holomorphically trivial normal bundle, yet, its neighbors are not isomorphic to $F$.

\begin{definition}
    \label{Hopf_transform}
    Let $E \subset S$ be an elliptic curve on a surface $S$ and $L$ its normal bundle. Assume that $\deg_EL = 0$ and $L$ satisfies the Diophantine condition. Let $L' \to E'$ be a bundle analytically cobordant to $L \to E$ (Definition \ref{def of duals and cobordant}). Pick a tubular neigbourhood of $E \subset S$ as in Theorem \ref{Arnold}, throw away $E$, and glue the tubular neighbourhood of the zero section of $L' \to F$ through a holomorphic isomorphism between $\Tot(L)-0_L$ and $\Tot(L')-0_{L'}$. The resulting surface $S'$ is called the {\bf Hopf transform} of $S$ in $E$ by $L'\to E'$. The image of the curve $E'$ in the Hopf transform is called the {\bf graft}.
\end{definition}


\subsubsection{} The new surface $S'$ may not be algebraic or even K\"ahler even if $S$ is. For example, take $S = \P(\O_E\oplus L)$ and embed $E$ as one of the sections. Then $S'$ is a Hopf surface.


\subsubsection{} The Hopf transform of a given surface in a given curve can be made in a countable number of non-isomorphic ways due to Theorem \ref{embeddings into all hopfs}. Nevertheless, the result of a Hopf transform depends only on the choice of an analytically cobordant line bundle and not on an isomorphism between a neighbourhood of $E$ in $S$ and in $\Tot(L)$. The proof relies on the following Lemma.


\begin{lemma}
\label{open embeddings}
    Let $L$ be a non-torsion degree-zero line bundle on an elliptic curve $E$. For $r>0$, denote by $W_r$ the subset of vectors in $\Tot(L)$ of length less than $r$. Consider an open embedding $j\colon W_r \to W_R$ identical on $E$, where $r,R\in \R_{>0}\sqcup\infty$. Then $j$ is the fiberwise multiplication by a constant $c\in \C^*$ such that $|c|\le R/r$.
\end{lemma}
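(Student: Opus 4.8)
The plan is to do everything in the explicit coordinates of \eqref{total_of_L}. Write $E = \C/(\Z+\Z\tau)$ and $L = \mathrm{L}(\tau,A,B)$ with $A,B\in U(1)$; since $|A|=|B|=1$, the function $|y|$ descends to $\Tot(L)=\C^2/\!\sim$, and it is precisely the norm for the flat Hermitian metric, so $W_r=\{\,|y|<r\,\}$. First I would record that the inclusion of the zero section $E\hookrightarrow W_r$ is a homotopy equivalence: the fiberwise homothety $(x,y)\mapsto(x,ty)$, $t\in[0,1]$, commutes with the identifications (here $|A|=|B|=1$ is used again) and descends to a deformation retraction of $W_r$ onto $E$. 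Hence the hypothesis $j|_E=\mathrm{id}_E$ forces $j_\ast=\mathrm{id}$ on $\pi_1(W_r)\cong\Z^2$. The universal cover of $W_r$ is the tube $\widetilde W_r=\{(x,y)\in\C^2:|y|<r\}\cong\C\times D_r$, obtained by restricting $\C^2\to\Tot(L)$, so $j$ lifts to a holomorphic map $\widetilde\jmath=(\phi,\psi)\colon\widetilde W_r\to\widetilde W_R$ equivariant for the \emph{same} deck transformations; concretely
$$
\phi(x+1,Ay)=\phi(x,y)+1,\qquad \psi(x+1,Ay)=A\,\psi(x,y),
$$
and likewise with $(\tau,B)$ in place of $(1,A)$, and one can normalize the lift so that $\phi(x,0)=x$, $\psi(x,0)=0$ (this uses $j|_E=\mathrm{id}_E$).

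The heart of the argument is a Taylor expansion in the fiber variable. On $\widetilde W_r=\C\times D_r$ I would write $\phi(x,y)=x+\sum_{k\ge1}a_k(x)y^k$ and $\psi(x,y)=\sum_{k\ge1}b_k(x)y^k$, the coefficients being holomorphic on all of $\C$ (Cauchy integrals in $y$) and the series converging for $|y|<r$. Substituting into the functional equations and comparing coefficients of $y^k$ shows that $a_k$ satisfies $a_k(x+1)=A^{-k}a_k(x)$, $a_k(x+\tau)=B^{-k}a_k(x)$, i.e.\ it transforms as a holomorphic section of $L^{\otimes(-k)}$, while $b_k$ transforms as a section of $L^{\otimes(1-k)}$. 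Now I invoke non-torsion: for $m\ne0$ the bundle $L^{\otimes m}$ is of degree zero and non-torsion, hence non-trivial, hence $H^0(E,L^{\otimes m})=0$. This annihilates every $a_k$ ($k\ge1$) and every $b_k$ with $k\ne1$, while $b_1$ is a global section of $\O_E$, i.e.\ a constant $c$. Therefore $\widetilde\jmath(x,y)=(x,cy)$, which is exactly fiberwise multiplication by $c$.

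It remains to pin down $c$. Since $j$ is injective on $W_r$, which strictly contains $E$ when $r>0$, we must have $c\ne0$, so $c\in\C^\ast$. Its image is $j(W_r)=W_{|c|r}$, and $W_{|c|r}\subseteq W_R$ forces $|c|\,r\le R$, i.e.\ $|c|\le R/r$, with the evident reading of the endpoint cases $r=\infty$ or $R=\infty$ (so that the statement is vacuous precisely when no such embedding exists).

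I expect the only genuinely delicate point to be the bookkeeping around the equivariant lift: one must check that "$j$ is the identity on $E$" really forces the induced map on $\pi_1$ to be the identity (not just some automorphism of $\Z^2$), so that $\widetilde\jmath$ obeys the functional equations with the \emph{same} multipliers $A,B$ rather than conjugated ones — otherwise the coefficients would not land in powers of the fixed bundle $L$. Everything past that is the mechanical but decisive fact that a non-trivial degree-zero line bundle on an elliptic curve has no nonzero holomorphic sections; this is the single place the non-torsion hypothesis is used, and it is also why the lemma fails for torsion $L$ (already for $L=\O_E$, where $\Tot(\O_E)=E\times\C$ admits fiberwise shears fixing $E$).
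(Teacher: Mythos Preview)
Your argument is correct and follows essentially the same route as the paper: lift to the universal cover $\C\times D_r$, use that $j|_E=\mathrm{id}$ to get genuine $\Lambda$-equivariance of the lift, and then kill all higher Taylor coefficients in $y$ by recognizing them as holomorphic sections of nontrivial degree-zero powers of $L$. The only noticeable difference is in the treatment of the fiber component: you handle $\psi$ by the same Taylor-coefficient mechanism, whereas the paper first observes directly that $x\mapsto g(x,y)$ is a holomorphic map $\C\to B_R$ and hence constant by Liouville, and then invokes density of the monodromy image in $U(1)$ (again non-torsion) to linearize $g(y)$. Your uniform treatment is arguably cleaner; the paper's shortcut for $g$ avoids one Taylor expansion but costs an extra density argument. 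Either way the crux is the same vanishing $H^0(E,L^{\otimes m})=0$ for $m\neq 0$, and your remarks on the equivariance bookkeeping and the failure for torsion $L$ are on point.
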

\begin{proof}
{\bf Step 1.} The space $W_r$ (resp.\! $W_R$) is isomorphic to $\C\times B_r/\Lambda$ (resp. $\C\times B_R/\Lambda$) where $\Lambda:= \Z + \Z\cdot\tau$ acts as follows: 
$$
\gamma\cdot(x,y) = (x+\gamma, \rho(\gamma)y).
$$
Here $B_r$ (resp. $B_R$) is the open disk of radius $r$ (resp. $R$) and $\rho\colon \Lambda\to U(1)$ is the monodromy representation. By the universal property of universal covers, we can lift $j$ to an open embedding $J\colon \C\times B_r\to \C\times B_{R}$. The map $J$ descends to quotients, hence for every $\gamma\in \Lambda$ there exists $\gamma'\in \Lambda$ such that $J(\gamma\cdot(x,y)) = \gamma'\cdot J(x,y)$. When $y=0$ the map $J$ is the identity, hence $\gamma' = \gamma$. We conclude that $J$ is $\Lambda$-equivariant.

\hfill


{\bf Step 2.} Write $J$ as $(f,g)$, where $f\colon \C\times B_r \to \C$ and $g\colon \C\times B_{r}\to B_{R}$. We will see in this step that $g(x,y) = cy$ for a constant $c\in\C^*$. For a fixed $y\in B_r$, the map $g$ is a holomorphic function from $\C$ to $B_{R}$, hence constant. We conclude that $g = g(y)$. The $\Lambda$-equivariance of $J$ implies that $g(\rho(\gamma)y) = \rho(\gamma)g(y)$. The image of $\Lambda$ in $U(1)$ is dense because $L$ is non-torsion. Therefore, $g$ is $U(1)$-equivariant, and our claim follows.

\hfill


{\bf Step 3.} By $\Lambda$-equivariance of $J$ we have that 
\begin{equation}
\label{aaa}
f(x+\gamma, \rho(\gamma)y) = f(x,y) + \gamma.
\end{equation} 
Let as differentiate the equation (\ref{aaa}) $k$ times with respect to $y$. We obtain
$$
\rho^k(\gamma)\frac{\di^k f}{\di y^k}(x+\gamma,\rho(\gamma)y) = \frac{\di^k f}{\di y^k}(x,y)
$$
Set $y=0$. The function $\left|\frac{\di^k f}{\di y^k}(x,0)\right|$ is invariant under shifts by $\gamma\in \Lambda$, hence bounded. We conclude that $\frac{\di^k f}{\di y^k}(x,0)$ is constant. The representation $\rho^k$ is non-trivial for every $k$, hence $\frac{\di^k f}{\di y^k}(x,0)$ vanishes. We conclude that $f$ does not depend on $y$. Hence $f(x,y) = f(x,0) = x$. We showed that $J(x,y) = (x,cy)$, hence the claim.
\end{proof}


\begin{corollary}
Let $E\subset S$ be as in Definition \ref{Hopf_transform}. Fix a line bundle $L'$ analytically cobordant to $L\to E$ (Definition \ref{def of duals and cobordant}). Then the Hopf transform of $S$ in $E$ by $L'$ is well-defined. Namely, consider two biholomorphisms $\phi_1\colon U_1\stackrel{\sim}\to W_1$ and $\phi_2\colon U_2\stackrel{\sim}\to W_2$  between neighborhoods $U_1$ and $U_2$ of $E$ in $S$ and neighborhoods $W_1$ and $W_2$ of $E$ in $\Tot(L)$. Assume that $\phi_1|_E = \phi_2|_E$. Then the Hopf transforms $S_1$ and $S_2$ of $S$ obtained through $\phi_1$ and $\phi_2$ respectively are biholomorphic.   
\end{corollary}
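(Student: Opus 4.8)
The plan is to reduce the corollary to Lemma~\ref{open embeddings} by a standard ``difference of two trivializations'' argument. First I would shrink: by replacing $U_1, U_2$ with $U_1 \cap U_2$ and the $W_i$ with the (open) images, I may assume $U_1 = U_2 =: U$, so that $\phi_1, \phi_2 \colon U \xrightarrow{\sim} W_i \subset \Tot(L)$ are two biholomorphisms agreeing on $E$, where now $W_1, W_2$ are open neighborhoods of $0_L$ in $\Tot(L)$. (Shrinking the tubular neighborhood used in the construction does not change the Hopf transform: the gluing is the same, just over a smaller collar, and the resulting surface is covered by the same two charts.) Then $\psi := \phi_2 \circ \phi_1^{-1} \colon W_1 \to W_2$ is an open embedding of neighborhoods of $0_L$ in $\Tot(L)$ which restricts to the identity on $E = 0_L$. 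Choosing $r$ with $W_r \subset W_1$, the restriction $\psi|_{W_r} \colon W_r \to \Tot(L)$ lands in some $W_R$, and Lemma~\ref{open embeddings} applies (the lemma only needs $L$ non-torsion and degree zero, which holds since $L$ is Diophantine): $\psi$ is fiberwise multiplication by a constant $c \in \C^*$ on $W_r$, and by analytic continuation on all of $W_1$ where it is defined.

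Next I would transport this across the gluing. Recall that $S_i$ is obtained by removing $E$ from $S$ and gluing in the tubular neighborhood $V'$ of $0_{L'}$ in $\Tot(L')$ via a fixed holomorphic isomorphism $\Phi \colon \Tot(L) - 0_L \to \Tot(L') - 0_{L'}$ (the one coming from the chosen analytic cobordism, sending neighborhoods of $0_L$ to neighborhoods of $0_{L'}$ or of infinity as appropriate), restricted to the relevant punctured collar and composed with $\phi_i$. Concretely, $S_i = \bigl( (S - E) \sqcup V' \bigr) / \sim_i$ where a point $p$ in the punctured collar of $U$ in $S-E$ is glued to $\Phi(\phi_i(p)) \in V'$. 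The scaling $m_c \colon v \mapsto cv$ is an automorphism of $\Tot(L)$ fixing $0_L$; it induces a self-biholomorphism of $\Tot(L) - 0_L$, hence via $\Phi$ a biholomorphism $\Phi \circ m_c \circ \Phi^{-1}$ of (a neighborhood of the relevant end of) $\Tot(L') - 0_{L'}$. The map I want, $S_1 \to S_2$, is defined to be the identity on $S - E$ (which is the common piece) and to be $\Phi \circ m_c \circ \Phi^{-1}$ on the $V'$-piece (possibly after shrinking $V'$, which again does not affect the transform). One checks the two definitions agree on the overlap precisely because $\phi_2 = m_c \circ \phi_1$ on the collar: for a collar point $p$, under the $S_1$-gluing it corresponds to $\Phi(\phi_1(p))$, which the map sends to $\Phi(m_c(\phi_1(p))) = \Phi(\phi_2(p))$, and that is exactly the $S_2$-image of $p$. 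Hence the two local biholomorphisms glue to a global biholomorphism $S_1 \xrightarrow{\sim} S_2$.

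The one genuine subtlety — and the step I expect to be the main obstacle — is making sure the self-map $\Phi \circ m_c \circ \Phi^{-1}$ extends across the graft curve $0_{L'}$ inside $V'$, i.e.\ that it really is (the restriction of) a biholomorphism of a neighborhood of $0_{L'}$ in $\Tot(L')$ and not merely of the punctured neighborhood. Here I would invoke the explicit Hopf-surface picture from \ref{Hopf-duals} and \ref{multiplication by a constant}: when $L, L'$ are Hopf dual, $\Tot(L) - 0_L \cong \Tot(L') - 0_{L'} \cong H(\lambda,\mu) - (E_\lambda \cup E_\mu)$, and the scaling $m_c = \diag(1,c)$ on $\Tot(L) = \mathcal L(\lambda,\mu)$ is the restriction of the automorphism $\diag(1,c)$ of the whole Hopf surface $H(\lambda,\mu)$, which in turn restricts to an automorphism of $\Tot(L') - 0_{L'} = H(\mu,\lambda) - E_\mu$ (read in the other coordinate) that visibly extends across $E_\lambda = 0_{L'}$; in the analytically-cobordant (rather than Hopf-dual) case, $m_c$ itself already extends over $0_{L'}$ via the fixed isomorphism respecting zero sections, using \ref{multiplication by a constant}. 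With that extension in hand the gluing is routine. Finally I would remark that the identity on $S-E$ and this extended map on $V'$ are each injective with open image and their images cover $S_2$, so the glued map is a biholomorphism, completing the proof.
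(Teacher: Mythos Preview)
Your argument is correct and follows essentially the same route as the paper: shrink the tubular neighborhood, apply Lemma~\ref{open embeddings} to identify the transition $\phi_2\circ\phi_1^{-1}$ with fiberwise scaling by some $c\in\C^*$, and then extend that scaling to an automorphism of the ambient Hopf surface via \ref{multiplication by a constant} so that it carries the graft to itself. One small terminological slip: in your case analysis you write ``when $L,L'$ are Hopf dual,'' but in the setup of the corollary $L$ and $L'$ are analytically cobordant, i.e.\ it is $L^{-1}$ and $L'$ that are Hopf dual (Definition~\ref{def of duals and cobordant}); the paper's proof accordingly embeds $\Tot(L^{-1})$, not $\Tot(L)$, into $H(\lambda,\mu)$, and your extension argument goes through verbatim once you route $m_c$ through $\Tot(L)-0_L\simeq\Tot(L^{-1})-0_{L^{-1}}$ first.
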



\begin{proof}The Hopf transforms $S_i$, $i=1,2$, will not change if we shrink $U_i$. Hence we may assume that $U_1\subset U_2$ and $\phi_1(U_1) = W_r$, $\phi_2(U_2)=W_{R}$ for some $r,R\in\R_{>0}$. The map $j:= \phi_2|_{U_2}\circ\phi_1^{-1}$ is an open embedding $W_r\to W_R$. Lemma \ref{open embeddings} implies that $j$ is the multiplication by a constant $c\in\C^\times $. Embed $\Tot(L^{-1})$ into a Hopf surface $H(\lambda,\mu)$ such that $H(\lambda,\mu)-E_\lambda\simeq \Tot(L^{-1})$ and $H(\lambda,\mu)-E_\mu\simeq \Tot(L')$. The biholomorphism $j$ extends to the automorphism $\diag(1,c)$ of $H(\lambda,\mu)$ by \ref{multiplication by a constant}. This automorphism of $H(\lambda,\mu)$ induces a biholomorphism $S_1\to S_2$.
\end{proof}


\subsubsection{} We only define the Hopf transform 
in elliptic curves with Diophantine normal bundle. The same definition 
works for any square-zero elliptic curve with
a holomorphic tubular neighbourhood. 

\hfill

An equivalent formulation of Enoki's theorem (Theorem \ref{enoki-classification}) 
is that every compactification of $\Tot(L)$ is a Hopf 
transform of $\mathbb P(\O\oplus L)$ in the infinity section. 
This statement generalizes partially 
to other complex surfaces.


\begin{theorem}
\label{all compactifications}
Suppose $M$ is a complex surface realizable as the complement to a
square-zero elliptic curve $E$ with a Diophantine normal bundle $L$ in a 
compact surface $S$. Then every minimal analytic compactification of $M$ is 
a Hopf transform of $S$ in $E$.
\end{theorem}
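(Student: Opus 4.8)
The plan is to reduce to Enoki's classification (Theorem~\ref{enoki-classification}) by an explicit cut-and-paste. Since $S$ is compact and $E$ connected, $M=S-E$ has a single end, a punctured tubular neighbourhood of $E$; as $L$ is Diophantine, the Arnold--Ueda theorem (Theorem~\ref{Arnold}) gives a biholomorphism $\phi$ from a neighbourhood of $E$ in $S$ onto a neighbourhood $W$ of the zero section $0_L$ in $\Tot(L)$, identifying this end with $W-0_L$. Let $(S',C')$ be a minimal analytic compactification of $M$ and $N'$ a neighbourhood of $C'$ in $S'$; after shrinking we may assume $N'-C'$ is this same end, so that $\phi$ identifies it with $W-0_L$. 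Put $P:=\P(\O_E\oplus L)$ with sections $E_0:=\P(\O)$ and $E_\infty:=\P(L)$, so $\Tot(L)=P-E_\infty$ with $E_0=0_L$, and $\Tot(L^{-1})=P-E_0$ with $E_\infty=0_{L^{-1}}$. Form a surface $T$ by gluing $P-E_0$ to $N'$ along the open set $W-0_L\subset P-E_0$, identified via $\phi^{-1}$ with $N'-C'\subset N'$.

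One checks that $T$ is a compact Hausdorff complex surface: it is covered by two relatively compact opens (one obtained from $P$ by deleting a smaller disc bundle around $E_0$, one a relatively compact neighbourhood of $C'$), and the gluing caps the ``$0_L$-puncture'' of the collar $W-0_L$ by the curve $C'$, so no point is doubled. By construction $T$ contains the two disjoint curves $C'$ and $E_\infty$, with $T-C'=P-E_0\cong\Tot(L^{-1})$ and $E_\infty$ its zero section; moreover $(T,C')$ is a \emph{minimal} compactification of $\Tot(L^{-1})$, since any exceptional curve supported in $C'\subset T$ would lie in $N'\subset S'$, contradicting minimality of $(S',C')$.

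Now apply Theorem~\ref{enoki-classification} to $(T,C')$; note $L^{-1}$ is non-torsion as $L$ is Diophantine. Either $T$ is a $\P^1$-bundle over an elliptic curve with $C'$ a square-zero section, or $T$ is a Hopf surface with $C'$ a smooth elliptic curve. In the first case the only square-zero elliptic curves of $T$ are its two natural sections, which must then be $C'$ and $E_\infty$. In the second case $E_\infty^2=\deg L^{-1}=0$ while $L^{-1}$ is non-torsion, so $T$ is not an elliptic surface and hence has exactly two irreducible curves (\ref{generalities on primary hopfs}, \ref{secondary-with-non-torsion}), namely $C'$ and $E_\infty$. In either case $T$ is simultaneously a compactification of $\Tot(L^{-1})$ by $C'$ (with $E_\infty$ the zero section) and of $\Tot(L'):=T-E_\infty$ by $E_\infty$ (with $C'$ the zero section), where $L'\to E':=C'$ is a degree-zero line bundle. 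Comparing the two descriptions, $L^{-1}\to E$ and $L'\to E'$ are Hopf dual when $T$ is a Hopf surface, and satisfy $L'=L$, $E'=E$ when $T$ is ruled; in both cases $L\to E$ and $L'\to E'$ are analytically cobordant (Definition~\ref{def of duals and cobordant}).

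It remains to identify $S'$ with the Hopf transform of $S$ in $E$ by $L'\to E'$. The chain of open identifications $\Tot(L)-0_L=P-(E_0\cup E_\infty)=T-(C'\cup E_\infty)=(T-E_\infty)-C'=\Tot(L')-0_{L'}$ composes to a biholomorphism $\Psi\colon\Tot(L)-0_L\to\Tot(L')-0_{L'}$. Inside $T$ the collar $W-0_L$ is glued precisely to the punctured neighbourhood $N'-C'$ of $C'=0_{L'}$, so $\Psi$ carries a neighbourhood of $0_L$ onto a neighbourhood of $0_{L'}$; by Proposition~\ref{isomorphism and cobordance} this makes $\Psi$ an admissible gluing isomorphism for a Hopf transform. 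Tracing the construction, $S'$ is obtained from $S-E$ by deleting $E$ and gluing back the neighbourhood $N'$ of the zero section of $\Tot(L')$ along exactly $\Psi|_{W-0_L}$; by Definition~\ref{Hopf_transform} and the well-definedness corollary following Lemma~\ref{open embeddings}, this is the Hopf transform of $S$ in $E$ by $L'\to E'$, with graft $C'$. I expect the main obstacle to be this last step --- checking that the cut-and-paste presentation of $S'$ is \emph{literally} the Hopf-transform recipe, and in particular that $\Psi$ sends neighbourhoods of zero sections to neighbourhoods of zero sections rather than to neighbourhoods of the sections at infinity; a secondary technical point is the verification that $T$ is compact and Hausdorff and that minimality transfers between $(S',C')$ and $(T,C')$.
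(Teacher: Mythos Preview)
Your proposal is correct and follows essentially the same strategy as the paper: use Arnold--Ueda to identify the end of $M$ with a punctured disc bundle in $\Tot(L)$, then reduce the classification of compactifications of $M$ to Enoki's classification (Theorem~\ref{enoki-classification}) of compactifications of the total space of a degree-zero line bundle. The paper's three-sentence proof leaves the auxiliary gluing implicit, whereas you construct the surface $T$ explicitly and check compactness, Hausdorffness, minimality, and the orientation of $\Psi$ on zero sections; these are exactly the details the paper suppresses, and your treatment of them is sound.
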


\begin{proof}
The Diophantine condition implies that the neighborhood of infinity in $X$ 
is biholomorphic to a neighborhood of the zero section in $\Tot(L)$ with 
the zero section removed (Theorem \ref{Arnold}). Hence, every partial compactification of $\Tot(L) - 0_L$ 
near the zero section produces a compactification of $M$ and vice versa. 
By Theorem \ref{embeddings into all hopfs} and Enoki's theorem, every partial compactification of $\Tot(L)-0_L$ is a Hopf transform at $0_L$.
\end{proof}


\begin{corollary}
    A Hopf transform of a Hopf transform in its graft is either a Hopf transform, or the initial surface. Inverse of a Hopf transform is a Hopf transform.
\end{corollary}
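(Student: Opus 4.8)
The plan is to observe that every Hopf transform of $(S,E)$, and also the inverse of any such transform, is a minimal analytic compactification of the fixed surface $M:=S-E$, and then to invoke Theorem \ref{all compactifications}, which identifies the minimal analytic compactifications of $M$ with the Hopf transforms of $S$ in $E$.

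First I would record the elementary observation that a Hopf transform $S'$ of $S$ in $E$ by a line bundle $L'\to E'$ is itself a minimal analytic compactification of $M$. Indeed, by Definition \ref{Hopf_transform} the surface $S'$ is built by deleting $E$ from $S$ and gluing in a tubular neighbourhood of the zero section $0_{L'}\subset\Tot(L')$ along a biholomorphism of a punctured neighbourhood of $0_L$ with one of $0_{L'}$; deleting the graft $E'=0_{L'}$ from $S'$ therefore returns exactly $S-E$, so $S'-E'\simeq M$. The graft is an irreducible elliptic curve, hence not exceptional, so this compactification is minimal, and $S'$ is again compact. Moreover $(S',E')$ satisfies the hypotheses of Theorem \ref{all compactifications}: the normal bundle of the graft is $L'$, which has degree zero — so $E'$ is square-zero — and is Diophantine, because the Diophantine property is preserved under analytic cobordism (as noted after Theorem \ref{Arnold}).

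Applying Theorem \ref{all compactifications} to both $(S,E)$ and $(S',E')$, the three sets of biholomorphism classes — Hopf transforms of $S$ in $E$, minimal analytic compactifications of $M$, and Hopf transforms of $S'$ in its graft $E'$ — all coincide. The first assertion of the corollary is now immediate: a Hopf transform of $S'$ in $E'$ is a minimal analytic compactification of $M$, hence a Hopf transform of $S$ in $E$, which is either a nontrivial such transform or the initial surface $S$ (itself one particular minimal compactification of $M$). For the second assertion, $S$ is a compactification of $M=S'-E'$ by the elliptic, hence non-exceptional, curve $E$, thus a minimal one; by Theorem \ref{all compactifications} applied to $(S',E')$ it is therefore a Hopf transform of $S'$ in $E'$, and reading off the compactifying curve shows this transform is by $L\to E$ and recovers the original $S$ — i.e. it inverts the transform producing $S'$.

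The only steps that are not pure formalities are the identification $S'-E'\simeq M$ and the persistence of the Diophantine condition for the graft's normal bundle, which together guarantee that Theorem \ref{all compactifications} applies to the transformed surface; I do not anticipate a real difficulty here. Should one wish to avoid Theorem \ref{all compactifications}, there is a direct alternative: writing the two successive transforms (the second by $L''\to E''$) as gluings of $S-E$, respectively $S'-E'$, along cobordance isomorphisms $c_1\colon\Tot(L)-0_L\xrightarrow{\sim}\Tot(L')-0_{L'}$ and $c_2\colon\Tot(L')-0_{L'}\xrightarrow{\sim}\Tot(L'')-0_{L''}$, the composite transform glues $S-E$ to a tubular neighbourhood of $0_{L''}$ along $c_2\circ c_1$; since analytic cobordance is an equivalence relation (the discussion after Proposition \ref{isomorphism and cobordance}), $c_2\circ c_1$ is again a cobordance isomorphism, so the result is a Hopf transform of $S$ in $E$ by $L''$, collapsing to $S$ precisely when $c_2\circ c_1$ extends to a multiplication by a constant on the ambient Hopf surface — in particular when $c_2=c_1^{-1}$, which gives the inverse transform.
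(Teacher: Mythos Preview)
Your proof is correct and follows exactly the paper's approach: a Hopf transform of a Hopf transform is a (minimal) compactification of $S-E$, so Theorem \ref{all compactifications} forces it to be a Hopf transform of $S$ in $E$. You simply spell out in more detail the checks the paper leaves implicit---that the graft, being elliptic, is not exceptional, and that the Diophantine condition persists so Theorem \ref{all compactifications} applies to $(S',E')$---and you add a direct alternative via composition of cobordance isomorphisms, but the core argument is the same.
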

\begin{proof}
The composition of Hopf transforms of $S$ in $E$ is a compactification of $S - E$, hence a Hopf transform (Theorem \ref{all compactifications}).
\end{proof}

\begin{corollary}
\label{algebraic_structures}
    Let $S$ be a projective surface with a square-zero elliptic curve $E$ with a Diophantine normal bundle. Suppose that each Hopf transform of $S$ in $E$ is projective. Then the set of algebraic structures on $M:=S-E$ is countable.
\end{corollary}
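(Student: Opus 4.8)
The plan is to show that the map sending an algebraic structure on $M$ to the corresponding minimal compactification lands in a countable set. First I would recall that an algebraic structure on $M$ — i.e.\ a smooth algebraic surface $Y$ with $Y^{an}\cong M$ — always admits a projective completion, and by resolving and blowing down we may arrange a \emph{minimal} compactification $(\bar Y, C)$ with $\bar Y$ projective and $C$ an effective divisor such that $\bar Y - C \cong M$ analytically. Since $M\cong S-E$ with $E$ a square-zero elliptic curve with Diophantine normal bundle, Theorem \ref{all compactifications} tells us that $(\bar Y,C)$ is a Hopf transform of $S$ in $E$. By hypothesis every such Hopf transform is projective, so conversely every minimal analytic compactification of $M$ arises this way and carries a (unique, since compact) algebraic structure. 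Hence the set of algebraic structures on $M$ injects into the set of (biholomorphism classes of) minimal compactifications of $M$, which equals the set of Hopf transforms of $S$ in $E$.

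Next I would bound the number of Hopf transforms. A Hopf transform of $S$ in $E$ is determined, by Definition \ref{Hopf_transform} together with the corollary following Lemma \ref{open embeddings}, purely by the choice of a line bundle $L'\to E'$ analytically cobordant to $L\to E$ — the gluing isomorphism $\Tot(L)-0_L \xrightarrow{\sim}\Tot(L')-0_{L'}$ does not affect the result up to biholomorphism. So it suffices to see that $L\to E$ has only countably many analytically cobordant line bundles. This is exactly the content of Proposition \ref{which hopfs arise} and Theorem \ref{embeddings into all hopfs}: the set of equivalence classes of open embeddings of $\Tot(L)$ into a Hopf surface (primary or secondary) is countable, and an analytic cobordance between $L$ and $L'$ is precisely such an embedding (into a Hopf surface whose two elliptic curves are the grafts), so the partner $L'\to E'$ ranges over a countable set. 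Therefore the minimal compactifications of $M$ form a countable set, and the algebraic structures on $M$ are at most countable.

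One point that needs care is the passage from an abstract algebraic structure $Y$ to a minimal \emph{analytic} compactification in the precise sense used by Theorem \ref{all compactifications}: one must check that the boundary curve $C$ has no exceptional components (contract them) and that the resulting object is genuinely a compact complex surface containing $M$ as the complement of a curve — this is standard but should be stated. A second subtlety is that two different algebraic structures could a priori give the \emph{same} minimal compactification; that is fine for an upper bound (the map need not be injective for countability of the source to follow from countability of the target, since it \emph{is} injective: a compact complex surface has at most one algebraic structure, so the minimal compactification determines $Y$ up to isomorphism). I expect the main obstacle to be none of these — they are routine — but rather making sure the hypothesis "each Hopf transform of $S$ in $E$ is projective" is used in the right place: it is what guarantees that the minimal compactification of any algebraic structure is itself algebraic, so that Theorem \ref{all compactifications} applies and the correspondence with Hopf transforms is exact rather than merely one-directional.
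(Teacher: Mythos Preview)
Your argument establishes only the upper bound: the set of algebraic structures on $M$ is \emph{at most} countable. But the corollary asserts that $M$ has countably many algebraic structures, i.e.\ infinitely many, and the paper's proof spends most of its effort on this lower bound. The point is that, since every Hopf transform of $S$ in $E$ is projective by hypothesis, each one restricts to an algebraic structure on $M$; the set of grafts $E'$ appearing in these Hopf transforms is countably infinite (this was established in \ref{subsection on sl2 action}); and the paper then checks that compactifications $(S',E')$ and $(S'',E'')$ with $E'\not\simeq E''$ yield \emph{non-isomorphic} algebraic structures on $M$. The argument for this last step is birational: an algebraic isomorphism $S'-E'\to S''-E''$ extends to a birational map $S'\dashrightarrow S''$, and a birational map of smooth surfaces cannot contract a non-rational curve, forcing $E'\simeq E''$. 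None of this appears in your proposal.

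Relatedly, you have the role of the projectivity hypothesis backwards. Theorem \ref{all compactifications} is about analytic compactifications and needs no algebraicity assumption, so the upper bound goes through without the hypothesis. Where the hypothesis is actually used is in the direction you treat as automatic: it guarantees that each of the countably many Hopf transforms is projective and hence \emph{produces} an algebraic structure on $M$, which feeds the lower-bound argument above.
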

\begin{proof}
    By Theorem \ref{all compactifications}, every compactification of $M$ is a Hopf transform of $S$ in $E$. Thus every compactification induces an algebraic structure on $M$. Conversely, every algebraic structure is induced from a compactification. Therefore, the set of compactifications $(S',E')$ of $M$ is countable (Theorem \ref{embeddings into all hopfs}). Moreover, the set of possible $E'$ is also countable. So, it is enough to prove that two compactifications $(S',E')$ and $(S'',E'')$ such that $E'\not\simeq E''$ induce distinct algebraic structures on $M$. An algebraic isomorphism between $\phi\colon S'-E'\to S''-E''$ would induce a birational morphism $\phi\colon S' \DashedArrow[->,densely dashed    ]  S''$. A birational map of smooth surfaces cannot contract a non-rational curve, hence $E'\simeq \phi(E')\simeq E''$, contradiction.
\end{proof}


\section{Surfaces with square-zero elliptic curves}\label{square-zero-section}


\subsection{Kodaira dimension}


\subsubsection{} 
\label{kodaira_not_two}
Let $S$ be a complex surface (not necessarily projective) containing a square-zero elliptic curve $E$. Then its Kodaira dimension $\kappa(S)$ is at most one. Indeed, suppose $\kappa(S) = 2$. Then the map $\phi\colon S \to \P^N$ induced by the linear system $|nK_S|$ is birational onto its image for sufficiently large $n$. Every square-zero elliptic curve $E$ satisfies $K_S\cdot E = 0$. Hence the map $\phi$ must contract $E$. Yet, a birational map of surfaces cannot contract a square-zero curve.


\begin{proposition}
    Let $S$ be a surface with square-zero elliptic curve $E\subset S$ whose normal bundle $\nu_{E/S}$ is non-torsion. Then $\kappa(S)=-\infty$.
\end{proposition}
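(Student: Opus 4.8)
The plan is to prove the stronger statement that $H^0(S, nK_S) = 0$ for every $n \ge 1$, which is precisely $\kappa(S) = -\infty$; notably this makes no use of the bound $\kappa(S)\le 1$ from \ref{kodaira_not_two} and sidesteps the classification of surfaces of non-negative Kodaira dimension entirely. The engine of the argument is adjunction. Since $E$ is a smooth genus-one curve with $E^2 = 0$, the adjunction isomorphism $\O_E = K_E \cong (K_S \otimes \O_S(E))|_E$ yields at once that $K_S \cdot E = 0$ and that $K_S|_E \cong \nu_{E/S}^{-1}$. By hypothesis $\nu_{E/S}$ is a non-torsion element of degree zero in $\mathrm{Pic}^0(E)$, so every power $\nu_{E/S}^{\otimes k}$ with $k \neq 0$ is again non-torsion; in particular none of these powers is $\O_E$.

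Now I would argue by contradiction. Assume $\kappa(S) \ge 0$, so that there is some $n \ge 1$ and a nonzero $s \in H^0(S, nK_S)$; let $D = \mathrm{div}(s) \ge 0$, so $\O_S(D) \cong \O_S(nK_S)$. Write $D = mE + D'$ where $m \ge 0$ is the multiplicity of $E$ in $D$ and $D' \ge 0$ is an effective divisor not containing $E$ as a component. Using $E^2 = 0$ and $K_S\cdot E = 0$ we get $D' \cdot E = D\cdot E - mE^2 = n\,(K_S\cdot E) = 0$. Because $E$ is not a component of $D'$, the section of $\O_S(D')$ cutting out $D'$ restricts to a \emph{nonzero} section on $E$, so $\O_S(D')|_E$ is an effective line bundle on $E$ of degree $D'\cdot E = 0$, hence $\O_S(D')|_E \cong \O_E$. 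Since $\O_S(E)|_E \cong \nu_{E/S}$, this gives
$$
\O_S(nK_S)|_E \;\cong\; \O_S(D)|_E \;\cong\; \bigl(\O_S(E)|_E\bigr)^{\otimes m}\otimes \O_S(D')|_E \;\cong\; \nu_{E/S}^{\otimes m}.
$$
On the other hand $\O_S(nK_S)|_E \cong (K_S|_E)^{\otimes n} \cong \nu_{E/S}^{\otimes(-n)}$. Comparing, $\nu_{E/S}^{\otimes(m+n)} \cong \O_E$ with $m + n \ge n \ge 1$, contradicting that $\nu_{E/S}$ is non-torsion. Hence $H^0(S,nK_S) = 0$ for all $n \ge 1$ and $\kappa(S) = -\infty$.

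The only delicate point — the closest thing to an obstacle — is that $E$ may itself appear in the pluricanonical divisor $D$, so one cannot simply restrict $D$ to $E$; this is dealt with by peeling off the $E$-part and invoking $E^2 = 0$ to see that the remainder $D'$ meets $E$ in degree zero and therefore restricts trivially. Since the whole argument is phrased in terms of intersection numbers and restrictions of line bundles to a curve, it is insensitive to whether $S$ is projective or even Kähler, which is exactly the generality we need here.
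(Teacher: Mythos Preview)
Your proof is correct and takes a genuinely different route from the paper's. The paper argues via classification: it first invokes \ref{kodaira_not_two} to reduce to $\kappa(S)\le 1$, then handles $\kappa=0$ and $\kappa=1$ separately using structure theory (torsion canonical bundle on the minimal model in the first case, the pluricanonical elliptic fibration in the second). You instead bypass classification entirely and show directly that any effective pluricanonical divisor $D\sim nK_S$ forces a torsion relation on $\nu_{E/S}$, by peeling off the $E$-component and using $E^2=0$ to compute $\O_S(nK_S)|_E$ two incompatible ways. This is more elementary and self-contained: it needs only adjunction and intersection theory on a compact complex surface, and works uniformly without knowing anything about minimal models or the Iitaka fibration. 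The paper's approach, while heavier, is more structural and makes visible \emph{where} such elliptic curves can sit in each Kodaira dimension, which dovetails with the trichotomy in \ref{trichotomy} that follows.
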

\begin{proof}
    Observe that $\nu_{E/S}\simeq \O_E(-K_S|_E)$. We consider two cases. 
    
    {\bf Case 1: $\kappa=0$.} Every minimal surface with $\kappa = 0$ has torsion canonical bundle, hence the statement is trivial in this case. If $S$ is non-minimal, then $K_S = \sum E_i +D$, where $E_i$'s are exceptional curves and $D$ is a torsion divisor. We get that $K_S|_E$ is either torsion or has positive degree for every curve $E\subset S$. 
    
    {\bf Case 2: $\kappa=1$.} The linear system $|nK_S|$ for $n\gg0$ induces a morphism $\phi \colon S\to \P^N$ whose image is a curve. It is an elliptic fibration. Elliptic curves in the fibers of $\phi$ have torsion normal bundle, so we can consider only horizontal curves. For every horizontal curve $C$, we have $n K_S|_C = (\phi|_C)^*H$, where $H$ is the hyperplane section. Therefore, $K_S\cdot C$ is positive. 
\end{proof}


\subsubsection{} \label{trichotomy} A surface of negative Kodaira dimension is one of the following:

\begin{itemize}
    \item rational;
    \item birational to a ruled surface;
    \item of class VII, i.e., a non-K\"ahler surface with $\kappa=-\infty$ and $b_1=1$.
\end{itemize}

If a blow-up of a ruled surface $S$ contains an elliptic curve, then $S$ is ruled over an elliptic curve.

\begin{proposition}\label{anti-canon-means-P2}
    Let $E \subset S$ be an anti-canonical elliptic curve with a degree-zero non-torsion normal bundle $\nu_{E/S}$. Assume  $b_1(S)=0$. Then $S$ is a blow-up of a length nine subscheme in $\P^2$.
\end{proposition}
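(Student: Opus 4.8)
The plan is to run the Enriques--Kodaira classification on $S$ and eliminate all cases except the rational one, then identify $S$ precisely. First I would observe that, by the preceding proposition, $\kappa(S) = -\infty$, so by the trichotomy in \ref{trichotomy} the surface $S$ is either rational, birational to a ruled surface over a curve of positive genus, or of class VII. The class VII case is excluded immediately by the hypothesis $b_1(S) = 0$, since class VII surfaces have $b_1 = 1$. To rule out the irrational ruled case, I would argue that since $b_1(S) = 0$ forces the base of any ruling to have genus zero; but as noted after \ref{trichotomy}, a blow-up of a ruled surface containing an elliptic curve must be ruled over an elliptic curve, and that base has $b_1 = 2 \ne 0$. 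Hence $S$ is rational.

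Next, since $S$ is rational and $E$ is an anti-canonical divisor, I would use the hypothesis $K_S \cdot E = -K_S \cdot K_S \cdot$... more precisely, $E \in |-K_S|$ together with $E^2 = 0$ gives $K_S^2 = E^2 = 0$. A rational surface with $K_S^2 = 0$ that is not minimal (a minimal rational surface has $K_S^2 = 8$ or $9$) must be a blow-up of $\P^2$ in nine points, possibly infinitely near: indeed, $K_S^2$ drops by exactly one under each blow-up, so $S$ is obtained from a minimal rational surface $S_0$ by $K_{S_0}^2 - 0$ blow-ups; one checks $S_0 = \P^2$ is forced (if $S_0 = \F_n$ with $K^2 = 8$ one gets eight blow-ups and can pass to $\P^2$ with nine, absorbing one point; the Hirzebruch case with $n \ge 2$ needs a short separate remark, or one notes that blowing up a point on the negative section and blowing down the fiber realizes $\F_n \dashrightarrow \F_{n-1}$, eventually reaching $\F_1 = \Bl_{pt}\P^2$ or $\F_0$, and $\F_0$ blown up once is $\Bl_{two\ pts}\P^2$). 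Therefore $S = \Bl_Z \P^2$ for a length-nine subscheme $Z$, where $Z$ may include infinitely near points; the curve $E$ is then the strict transform of a plane cubic through $Z$, which is automatically anti-canonical on the blow-up.

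The main obstacle I expect is the bookkeeping needed to show that a non-minimal rational surface with $K_S^2 = 0$ is genuinely a blow-up of $\P^2$ (rather than of some $\F_n$) in a length-nine subscheme, and to handle infinitely near points cleanly. The cleanest route is probably: take a minimal model $S \to S_0$; then $S_0 \in \{\P^2, \F_n : n \ne 1\}$; count that the number of blow-ups is $9 - K_{S_0}^2$, which is $9$ if $S_0 = \P^2$ and... wait, $K_{\P^2}^2 = 9$ gives nine blow-ups, $K_{\F_n}^2 = 8$ gives eight. In the latter case one extra step using the elementary transformation $\F_n \dashrightarrow \F_{n\pm1}$ (blow up a point, blow down a fiber) reduces to $\F_0$ or $\F_1$, and $\F_0$ with one point blown up equals $\F_1$ with one point blown up on the $(-1)$-curve's... one ends at $\Bl_{pt}\P^2$; combining shows $S$ is $\P^2$ blown up in a length-nine subscheme. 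Alternatively — and more slickly — since $E \in |-K_S|$ is an irreducible curve of positive genus with $E^2 = 0$, one knows $h^0(-K_S) \ge 1$ and a direct argument via Riemann--Roch on $S$ shows $-K_S$ is effective with the nine points recovered as the base locus data; but I would present the minimal-model argument as the primary one and invoke standard surface theory (e.g.\ \cite{Barth_Hulek_Peters_Van_de_Ven}) for the Hirzebruch bookkeeping.
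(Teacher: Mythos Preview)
Your overall strategy is sound and close to the paper's, but the Hirzebruch reduction step contains a concrete error and a real gap.

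First, a minor difference that is fine: for rationality, the paper argues directly via Castelnuovo's criterion ($-K_S$ is effective so all plurigenera vanish, and $b_1=0$ gives $q=0$), while you go through the trichotomy of \ref{trichotomy} and rule out class~VII and irrational ruled surfaces using $b_1$. Both routes are valid; the paper's does not even need the non-torsion hypothesis or the preceding proposition.

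The problem is in your passage from a minimal model $\F_n$ to $\P^2$. You write that ``blowing up a point on the negative section and blowing down the fiber realizes $\F_n \dashrightarrow \F_{n-1}$''. This is backwards: an elementary transformation centred on the negative section $C_n$ produces $\F_{n+1}$; to decrease $n$ you must blow up a point \emph{off} $C_n$. More seriously, even with the sign fixed, you have not explained why such points are available. In general a rational surface with $K^2=0$ obtained by blowing up eight points on $\F_n$ need not be a blow-up of $\P^2$ unless you control where those points sit (e.g.\ $\Bl_{p\in C_2}\F_2$ has only $\F_2$ and $\F_3$ as minimal models and is not $\Bl_{2}\P^2$).

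The missing idea, which the paper supplies, is to track the curve $E$ down to the minimal model. Since $E\in|-K_S|$, one has $E\cdot D=1$ for every $(-1)$-curve $D$, so $E$ maps to an \emph{irreducible} anti-canonical curve on $S_0$, and all blow-up centres lie on its image. On $\F_n$ one computes $-K_{\F_n}\cdot C_n = 2-n$, so for $n>2$ every anti-canonical curve contains $C_n$ and is reducible; hence $S_0\in\{\P^2,\F_0,\F_2\}$. For $\F_2$ the image of $E$ is disjoint from $C_2$, so the blow-up centres lie off $C_2$, and a single elementary transformation at such a point yields $\F_1=\Bl_{\mathrm{pt}}\P^2$; similarly $\Bl_{\mathrm{pt}}\F_0\cong\Bl_{2\,\mathrm{pts}}\P^2$. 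This is the ``short separate remark'' you allude to, but it is the crux of the argument, and without it the reduction does not go through.
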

\begin{proof}
    The canonical class of $S$ is anti-effective, hence all plurigenera $p_n := h^0(K_S^n)$ of $S$ vanish. Since $b_1(S) = 0$, the irregularity $q = h^1(\O_Y)$ vanishes as well. Castelnuovo theorem implies that $S$ is rational, in particular, projective \cite[VI (3.4)]{Barth_Hulek_Peters_Van_de_Ven}. Its minimal model is either $\P^2$ or a Hirzebruch surface $\F_n = \P\left(\O_{\P^1}\oplus\O_{\P^1}(-n)\right)$. The image of $E$ in a minimal model is an irreducible anti-canonical curve: indeed, $E$ is anti-canonical, so it intersects each $(-1)$-curve transversely at one point. Since $K_S^2=0$, $S$ is either a blow-up of nine points in $\P^2$ or of eight points in $\F_n$. 

\hfill

    The Hirzebruch surface $\F_n$ contains a section $C_n$ of square $-n$. Projection formula yields $-K_{\F_n}\cdot C = 2-n$, thus for $n>2$ an anti-canonical curve contains $C$ and is reducible. Therefore, any minimal model of $S$ is $\P^2$, $\F_0 = \P^1\times\P^1$, or $\F_2$. An anti-canonical curve in $\F_2$ does not intersect $C_2$. A blow-up of a point away from $C_2$ is isomorphic to a blow-up of a length-two subscheme in $\P^2$. A blowup of any point on $\F_0 = \P^1\times\P^1$ is isomorphic to a blowup of two points on $\P^2$. Thus $S$ is a blow-up of $\P^2$.
\end{proof}


\subsection{Rational surfaces}


Let us start with the following lemma.

\begin{lemma}\label{excision-lemma}Let $S$ be a surface, and $C \subset S$ a smooth curve. Then the natural map $H_1(S-C,\Q) \to H_1(S,\Q)$ is surjective. If $S$ is K\"ahler, then it is also injective.
\end{lemma}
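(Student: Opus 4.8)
The plan is to read both assertions off the Thom--Gysin sequence of $C\subset S$. Since $C$ is a smooth complex curve, its normal bundle in $S$ is a complex line bundle, hence an oriented real $2$-plane bundle; excising an open tubular neighbourhood and applying the Thom isomorphism identifies $H^{k}(S,S-C;\Q)$ with $H^{k-2}(C;\Q)$, so the long exact cohomology sequence of the pair $(S,S-C)$ becomes the Gysin sequence
$$\cdots\to H^{k-2}(C;\Q)\xrightarrow{\,g\,}H^{k}(S;\Q)\xrightarrow{\,i^{*}\,}H^{k}(S-C;\Q)\xrightarrow{\,\mathrm{res}\,}H^{k-1}(C;\Q)\xrightarrow{\,g\,}H^{k+1}(S;\Q)\to\cdots,$$
where $g$ is the Gysin pushforward along $C\hookrightarrow S$ and $i\colon S-C\hookrightarrow S$ is the inclusion. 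Over the field $\Q$, universal coefficients dualises everything: surjectivity of $i_{*}\colon H_{1}(S-C;\Q)\to H_{1}(S;\Q)$ is equivalent to injectivity of $i^{*}\colon H^{1}(S;\Q)\to H^{1}(S-C;\Q)$, and injectivity of $i_{*}$ is equivalent to surjectivity of $i^{*}$.

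For surjectivity I would just specialise to $k=1$: the term one step to the left of $H^{1}(S;\Q)$ is $H^{-1}(C;\Q)=0$, so $i^{*}$ is injective and hence $i_{*}$ is onto. (No K\"ahler hypothesis enters, as it should not: alternatively one perturbs a loop, or a homotopy, off the real-codimension-$2$ set $C$ and gets surjectivity already on $\pi_{1}$.) For injectivity, the relevant segment at $k=1$ is the exact sequence $0\to H^{1}(S;\Q)\xrightarrow{i^{*}}H^{1}(S-C;\Q)\xrightarrow{\mathrm{res}}H^{0}(C;\Q)\xrightarrow{g}H^{2}(S;\Q)$, so $i^{*}$ is surjective precisely when $g$ is injective. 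Taking $C$ connected (which is the case in all applications, where $C$ is an elliptic curve), $H^{0}(C;\Q)=\Q$ and $g(1)$ is the fundamental cohomology class $\eta_{C}\in H^{2}(S;\Q)$, Poincar\'e dual to $[C]\in H_{2}(S;\Q)$. So everything reduces to showing $\eta_{C}\neq 0$, equivalently $[C]\neq 0$.

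This last point is the only real content of the lemma, and it is exactly where the K\"ahler hypothesis is used: if $\omega$ is a K\"ahler form on $S$ (here we use that $S$ is compact), then $\langle[\omega],[C]\rangle=\int_{C}\omega>0$, because $\omega$ restricts to a strictly positive area form on the complex curve $C$; hence $[C]\neq 0$, $g$ is injective, $i^{*}$ is surjective, and $i_{*}$ is injective. I expect this to be the main obstacle in the sense that no purely topological argument can replace it: on a non-K\"ahler surface a smooth elliptic curve can be homologically trivial --- a curve inside a Hopf surface is the prototype --- and then $i_{*}$ genuinely has a one-dimensional kernel, spanned by a meridian of $C$. (If $C$ is allowed to be disconnected, the same computation yields injectivity of $i_{*}$ precisely when the classes of the components of $C$ are linearly independent in $H_{2}(S;\Q)$, which again holds when $C$ is irreducible.)
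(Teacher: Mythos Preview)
Your argument is correct and is the Poincar\'e-dual twin of the paper's: the paper runs the compactly supported cohomology sequence for the decomposition $S=C\sqcup(S-C)$ in degrees $2$--$3$ and identifies $H^{3}_{c}(S-C;\Q)\cong H_{1}(S-C;\Q)$, whereas you run the Gysin sequence in degree $1$ and pass through universal coefficients. Both reductions land on the same nontriviality of $[C]\in H_{2}(S;\Q)$ via the K\"ahler form, and both tacitly use that $C$ is connected.
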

\begin{proof}
    Look at the long exact sequence of cohomology associated with the decomposition $S = C \sqcup (S-C)$:
        $$\dots \to H^2(S,\Q) \to H^2(C,\Q) \to H^3_c(S-C,\Q) \to H^3(S,\Q) \to H^3(C,\Q) = 0.$$
    If $S$ is K\"ahler, the class of $C$ in $H_2(S)$ is non-trivial. Therefore the map $H^2(S,\Q)\to H^2(C,\Q)$ is surjective, and $H_1(S-C,\Q) \simeq H^3_c(S-C,\Q)\simeq H^3(S,\Q)\simeq H_1(S,\Q)$.
\end{proof}

The next statement follows easily from Lemma \ref{excision-lemma}, and we omit its proof.

\begin{corollary}
    \begin{enumerate}
        \item Let $S$ be a rational surface, $C \subset S$ a smooth curve. Then $h_1(S-C)=0$.
        \item Let $S$ be a blow-up of a ruled surface over an elliptic curve, $C \subset S$ a smooth curve. Then $h_1(S-C)=2$. 
    \end{enumerate}
\end{corollary}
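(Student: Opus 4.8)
The plan is to deduce both statements directly from Lemma \ref{excision-lemma}. The key observation is that a rational surface is simply connected, hence $H_1(S,\Q)=0$, while a blow-up $S$ of a ruled surface over an elliptic curve $B$ satisfies $H_1(S,\Q)\simeq H_1(B,\Q)\simeq \Q^2$, since blowing up a point does not change $H_1$ and a ruled surface over $B$ has the same $H_1$ as $B$ (the $\P^1$-fibers are simply connected). Both surfaces are projective, in particular K\"ahler, so the injectivity clause of Lemma \ref{excision-lemma} applies.

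First I would recall that for a rational surface $S$ one has $b_1(S)=0$, so $H_1(S,\Q)=0$; since $S$ is K\"ahler, Lemma \ref{excision-lemma} gives an isomorphism $H_1(S-C,\Q)\simeq H_1(S,\Q)=0$, whence $h_1(S-C)=0$. This proves part (1).

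For part (2), I would first compute $H_1(S,\Q)$ for $S$ a blow-up of a ruled surface $p\colon R\to B$ over an elliptic curve $B$. A ruled surface over $B$ is a $\P^1$-bundle (Zariski-locally trivial), so the homotopy exact sequence of the fibration, together with $\pi_1(\P^1)=0$, shows that $p_*\colon \pi_1(R)\to\pi_1(B)$ is an isomorphism; in particular $H_1(R,\Q)\simeq H_1(B,\Q)\simeq\Q^2$. Blowing up a smooth point replaces a neighbourhood by one with a $\P^1$ glued in, which does not affect $\pi_1$, so $H_1(S,\Q)\simeq\Q^2$ as well. Since $S$ is again projective, hence K\"ahler, Lemma \ref{excision-lemma} yields $H_1(S-C,\Q)\simeq H_1(S,\Q)\simeq\Q^2$, i.e., $h_1(S-C)=2$.

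The only mildly delicate point is the K\"ahler hypothesis in Lemma \ref{excision-lemma}, which is needed for injectivity of $H_1(S-C,\Q)\to H_1(S,\Q)$; this is automatic here since all the surfaces in question are projective. There is no real obstacle: everything reduces to the standard facts that rational surfaces are simply connected and that ruled surfaces over a curve $B$ have the same fundamental group as $B$, both unaffected by blow-ups. Hence the proof is a short application of the lemma, which is why it was omitted in the text.
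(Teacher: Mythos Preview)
Your proposal is correct and is precisely the argument the paper has in mind: the authors omit the proof, stating only that the corollary ``follows easily from Lemma \ref{excision-lemma}'', and your derivation from that lemma together with the standard facts $H_1(S,\Q)=0$ for rational $S$ and $H_1(S,\Q)\simeq\Q^2$ for a blow-up of a ruled surface over an elliptic curve (both projective, hence K\"ahler) is exactly the intended route.
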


\begin{corollary}\label{Hopf-transform-preserves-rationality}
    \begin{enumerate}
        \item A Hopf transform of a rational surface is rational.
        \item A Hopf transform of a surface birational to a ruled surface is birational either to a ruled surface or a Hopf surface.
    \end{enumerate}
\end{corollary}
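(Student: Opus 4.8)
The plan is to read off the birational type of a Hopf transform from a single homological invariant, combining Lemma~\ref{excision-lemma}, the preceding corollary, and the trichotomy \ref{trichotomy}. Fix $S$ as in the statement, write $M=S-E$, and let $(S',E')$ be any Hopf transform of $S$ in $E$, with $E'$ the graft, so that $S'$ is a smooth compact surface and $M\cong S'-E'$. Its normal bundle $\nu_{E'/S'}$ is the line bundle $L'$ analytically cobordant to $L$; being analytically cobordant to a Diophantine bundle, $L'$ is again Diophantine, in particular non-torsion. Hence $S'$ carries a square-zero elliptic curve with non-torsion normal bundle, so $\kappa(S')=-\infty$, and by \ref{trichotomy} exactly one of the following holds: $S'$ is rational; $S'$ is birational to a ruled surface — and then, containing the elliptic curve $E'$, it is a blow-up of a ruled surface over an elliptic curve; or $S'$ is of class VII. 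The same applies to $S$: in case (1) it is rational, in case (2) a blow-up of a ruled surface over an elliptic curve.

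The homological input is that $h_1$ of the complement separates these cases. By the preceding corollary, $h_1(T-C)=0$ if $T$ is rational and $h_1(T-C)=2$ if $T$ is a blow-up of a ruled surface over an elliptic curve, for any smooth curve $C\subset T$; and if $T$ is of class VII, then Lemma~\ref{excision-lemma} gives a surjection $H_1(T-C,\Q)\twoheadrightarrow H_1(T,\Q)$, so $h_1(T-C)\ge b_1(T)=1$. (In fact, for a square-zero curve $C$ the class of $C$ vanishes in $H^2(T;\Q)$ — the intersection form on a class VII surface is negative definite — so the Gysin sequence gives $h_1(T-C)=b_1(T)+1=2$; this explains why $h_1$ alone will not settle the class VII case below.)

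Part (1) is now immediate: if $S$ is rational then $h_1(M)=0$, which excludes $S'$ being of class VII ($h_1(M)\ge 1$) and $S'$ being a blow-up of a ruled surface over an elliptic curve ($h_1(M)=2$); hence $S'$ is rational. For part (2): if $S$ is a blow-up of a ruled surface over an elliptic curve then $h_1(M)=2$, so $S'$ is not rational; thus $S'$ is either a blow-up of a ruled surface over an elliptic curve — hence birational to a ruled surface, as claimed — or of class VII.

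It remains, and this is the crux, to show that in the class VII case $S'$ is birational to a Hopf surface; since Hopf surfaces are minimal, this amounts to showing that the minimal model $S'_{\min}$ is a Hopf surface. Contracting $(-1)$-curves never affects the elliptic curve $E'$, so $S'_{\min}$ is a minimal class VII surface containing a curve; by Enoki's classification of class VII surfaces with curves \cite{Enoki82}, either $b_2(S'_{\min})=0$, in which case $S'_{\min}$ is a Hopf surface (Inoue surfaces carry no curves) and we are done, or $b_2(S'_{\min})>0$, in which case the curves on $S'_{\min}$ are carried by a cycle of rational curves. I expect to exclude the second alternative via the non-torsion hypothesis: such a surface carries an elliptic fibration with the cycle among its fibres, so every elliptic curve on it — and on any blow-up of it — has torsion normal bundle, contradicting that $\nu_{E'/S'}=L'$ is non-torsion. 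As a sanity check, in the representative situation where $E$ is a square-zero section of an honest ruled surface over an elliptic curve the whole argument collapses to Enoki's theorem: there $M=\Tot(\nu_{E/S})$ is the total space of a degree-zero non-torsion line bundle, and \ref{enoki-classification} gives directly that $S'$ is a $\P^1$-bundle over an elliptic curve or a Hopf surface; the general case should be reduced to this by first contracting the $(-1)$-curves of $S$ lying in $M$ and tracking how $E$ meets the exceptional locus. The hardest point throughout is controlling the incidence of $E$ (respectively $E'$) with rational and exceptional curves, and this is precisely where the non-torsion (Diophantine) hypothesis does the work.
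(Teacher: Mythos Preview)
Your treatment of part (1) is correct and matches the paper's proof essentially verbatim: $h_1(M)=0$ forces $h_1(S')=0$ via Lemma~\ref{excision-lemma}, and the trichotomy~\ref{trichotomy} leaves only the rational case.

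For part (2), your exclusion of the rational case via $h_1(M)=2\neq 0$ is fine; the paper instead argues by invertibility of Hopf transforms (if $S'$ were rational, then $S$, being a Hopf transform of $S'$, would be rational by part (1)). Both work.

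The gap is in your handling of the class VII case. Your claim that a minimal class VII surface with $b_2>0$ containing a curve ``carries an elliptic fibration with the cycle among its fibres'' is false: Kato surfaces (Enoki surfaces, Inoue--Hirzebruch surfaces, intermediate surfaces) are never elliptic, and their curves are rational and of \emph{negative} square. So the mechanism you propose for excluding them does not exist, and the ``I expect'' paragraph does not close the argument. The correct route is the one the paper records in Proposition~\ref{classification-for-class-vii}: first, on any non-K\"ahler surface every curve has non-positive self-intersection \cite[IV, Thm.~2.14]{Barth_Hulek_Peters_Van_de_Ven}, so if $E'$ met a $(-1)$-curve its image after contraction would have positive square --- impossible; hence $E'$ descends unchanged to the minimal model $S'_{\min}$ with $(E')^2=0$. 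Then one invokes Enoki's theorem \cite[Prop.~4.12]{Enoki}: a minimal class VII surface carrying a nonsingular curve of square zero is a Hopf surface. This is a black box, not something to reconstruct from the $b_2$ dichotomy. (The paper's own proof of the corollary is terse on this point and tacitly forward-references Proposition~\ref{classification-for-class-vii}.)

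Your assertion that ``contracting $(-1)$-curves never affects the elliptic curve $E'$'' is true in the class VII case for the reason above, but it is not automatic and deserves the one-line justification.
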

\begin{proof}
    Let $S$ be a rational surface. Lemma \ref{excision-lemma} implies that $h_1(S-C) =0$. Consider a Hopf transform $(S',C')$ of $S$ in $C$. The space $S'-C'$ is biholomorphic to $S-C$, hence $h_1(S'-C') = h_1(S-C)$. Lemma \ref{excision-lemma} implies that $h_1(S'-C') \geqslant h_1(S')$. Thus a Hopf transform of a rational surface satisfies $h_1(S') = 0$. The classification in \ref{trichotomy} implies that $S'$ is rational. 
    
    A Hopf transform of a blow-up of a ruled surface cannot be rational by the previous statement. By \ref{trichotomy} its Hopf transform is either a blow-up of a ruled surface or a Hopf surface.
\end{proof}

\subsubsection{} Choose an elliptic curve $E$ in $\P^2$ and nine points $p_1,...p_9\in E$ (points are allowed to collide). Denote the hyperplane section of $\P^2$ as $H$. Assume that the line bundle $\O_E(3H-p_1 -...-p_9)$ satisfies the Diophantine condition (Definition \ref{diophantine-definition}). This line bundle is isomorphic to the normal bundle to the strict transform of $E$ in the blow-up $X$ in $p_1,...p_9$. Therefore, a Hopf transform $Y$ of $X$ in $E$ is well-defined.

\begin{theorem}
\label{hopf_of_blow_up9}
    Any Hopf transform $Y$ of $X$ is the blow-up of $\P^2$ in a length nine subscheme $\gamma$. The graft is the strict preimage in $Y$ of the elliptic curve $F \subset \P^2$ passing through $\gamma$.
\end{theorem}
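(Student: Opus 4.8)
The plan is to pin down $Y$ by first showing it is a rational surface and that the graft $E'$ is an anti-canonical curve on it, and then to invoke Proposition \ref{anti-canon-means-P2}. Since $X$ is rational and a Hopf transform of a rational surface is rational (Corollary \ref{Hopf-transform-preserves-rationality}), $Y$ is rational; in particular $b_1(Y)=0$. The graft $E'$ is a smooth elliptic curve whose normal bundle in $Y$ is the line bundle $L'\to E'$ used to perform the transform. Being analytically cobordant to $L=\O_E(3H-p_1-\dots-p_9)$, which is Diophantine and hence non-torsion, the bundle $L'$ is again non-torsion of degree zero, so $E'$ is a square-zero elliptic curve with non-torsion normal bundle.

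The next step is to prove that $-K_Y\sim E'$. The complement $M:=X-E=Y-E'$ has trivial canonical bundle: because $E$ is the strict transform of the cubic through $p_1,\dots,p_9$ we have $K_X=\O_X(-E)$, hence $K_M=K_X|_M=\O_M$. Consequently $-K_Y$ restricts trivially to $Y-E'$, and since the kernel of $\Pic(Y)\to\Pic(Y-E')$ is generated by $\O_Y(E')$, we get $-K_Y=\O_Y(mE')$ for some $m\in\Z$. As $(E')^2=0$ this forces $(-K_Y)^2=0$, whereas $\P^2$ and the Hirzebruch surfaces have $(-K)^2\in\{8,9\}$; so $Y$ is non-minimal and contains a $(-1)$-curve $C$. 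Then $C$ is rational and $E'$ elliptic, so $E'\cdot C\ge 0$, and intersecting $\O_Y(mE')=-K_Y$ with $C$ gives $m\,(E'\cdot C)=-K_Y\cdot C=1$; hence $m=1$ and $-K_Y\sim E'$. (Equivalently: $M$ carries a nowhere-vanishing holomorphic $2$-form, obtained by inverting the section of $-K_X$ vanishing along $E$, which extends meromorphically to $X$ with polar divisor $E$ and, via Theorem \ref{Arnold} near the graft together with the matching of symplectic forms in \ref{cobord-symplect}, to a meromorphic $2$-form on $Y$ with polar divisor $E'$ and no zeros, giving the same conclusion.)

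Now $E'\subset Y$ is an anti-canonical elliptic curve with degree-zero non-torsion normal bundle and $b_1(Y)=0$, so Proposition \ref{anti-canon-means-P2} produces a birational morphism $\pi\colon Y\to\P^2$ contracting $Y$ onto a length-nine subscheme $\gamma$. As in the proof of that proposition, $F:=\pi(E')$ is an irreducible anti-canonical curve of $\P^2$, i.e.\ a plane cubic, and $\pi$ restricts to a birational morphism $E'\to F$; since $E'$ is smooth of genus one, the curve $F$ must itself be smooth, hence an elliptic curve, with $E'$ its strict transform. The subscheme $\gamma$ is contained in $F$: otherwise the strict transform of $F$ would have positive self-intersection, contradicting $(E')^2=0$. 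Finally $F$ is the unique cubic through $\gamma$, because the restriction sequence $0\to\O_Y\to\O_Y(E')\to L'\to 0$ together with $h^0(E',L')=0$ (as $L'$ is a non-trivial degree-zero bundle) gives $h^0(Y,-K_Y)=1$, so $E'$ is the only anti-canonical curve on $Y$.

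The main obstacle is the middle step, determining $-K_Y$. The intersection-theoretic argument above is short but relies on knowing $K_M$ is trivial and on the classification of minimal rational surfaces; the alternative $2$-form argument is closer in spirit to the rest of the paper but needs genuine care across the graft, namely checking that under the Arnold--Ueda identification and the cobordance biholomorphism the transported form acquires exactly a simple pole along $E'$ and no spurious zeros. Everything else — applying Proposition \ref{anti-canon-means-P2} and reading off $\gamma$, $F$ and their uniqueness — is routine bookkeeping with the Picard restriction sequences.
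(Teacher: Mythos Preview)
Your proof is correct. Your parenthetical ``equivalently'' argument---transporting the meromorphic $2$-form across the graft via \ref{cobord-symplect}---is exactly the paper's proof: the paper observes that $(X,E)$ is log Calabi--Yau, invokes \ref{cobord-symplect} to carry the symplectic form with a simple pole along $E$ to one with a simple pole along the graft, concludes $K_Y+E'=0$, and then applies Proposition \ref{anti-canon-means-P2}.

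Your primary argument is a genuinely different route to $K_Y+E'=0$. Instead of tracking the $2$-form through the gluing (which, as you note, requires care about pole orders near the graft), you use only that $K_M$ is holomorphically trivial, hence $c_1(K_Y)$ dies in $H^2(Y-E',\Z)$; since $Y$ is rational and $\Pic(Y)\xrightarrow{c_1}H^2(Y,\Z)$ is an isomorphism, this forces $K_Y=\O_Y(mE')$. You then pin down $m=1$ by pure intersection theory: $K_Y^2=0$ rules out the minimal rational surfaces, and pairing with any $(-1)$-curve gives $m\cdot(E'\cdot C)=1$. This avoids the analytic input of \ref{cobord-symplect} entirely, at the price of a short detour through the classification of minimal rational surfaces. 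One small point worth making explicit: the passage from ``$K_Y|_{Y-E'}$ is \emph{analytically} trivial'' to ``$K_Y\in\Z\cdot\O_Y(E')$ in $\Pic(Y)$'' really does use that $Y$ is rational (so $\Pic(Y)\hookrightarrow H^2(Y,\Z)$), which you have already established via Corollary \ref{Hopf-transform-preserves-rationality}. Your additional remarks on smoothness of $F$, the inclusion $\gamma\subset F$, and the uniqueness of the cubic through $\gamma$ are correct refinements that the paper leaves implicit.
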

\begin{proof} 
{\bf Step 1.} The surface $(X,E)$ is log Calabi--Yau, that is, $K_X+E=0$. This implies existence of a meromorphic symplectic form on $X$ with a simple pole along $E \subset X$. We know from \ref{cobord-symplect} that a Hopf transform is a gluing along a holomorphic symplectomorphism, hence $Y$ carries a symplectic form with a simple pole along $F \subset Y$. Thus $K_Y+F=0$.

\hfill

{\bf Step 2.} By Corollary \ref{Hopf-transform-preserves-rationality}, $Y$ is rational. Moreover, its anti-canonical divisor is effective. By Proposition \ref{anti-canon-means-P2}, $Y$ is a blowup of a length-nine subscheme in $\P^2$, and $F \subset Y$ is the strict transform of the only plane cubic passing through it. 
\end{proof}

The following question remains open:

\begin{problem}
   Determine the nine points $q_1,...q_9\in \P^2$ such that $Y \simeq \Bl_{q_1,..q_9}(\P^2)$. 
\end{problem}


\begin{theorem}\label{many-algebraic-structures}
    Let $X$ be the blowup of $\P^2$ in a very generic length-nine subscheme $\gamma$, and $E \subset X$ the strict transform of the plane cubic passing through $\gamma$. Then the analytification of $X-E$ admits countably many algebraic structures.
\end{theorem}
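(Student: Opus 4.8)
The plan is to obtain the upper bound (at most countably many algebraic structures) from Corollary~\ref{algebraic_structures} and the lower bound (at least countably many) from an explicit infinite family of Hopf transforms, and then to separate the members of that family by their grafts.

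First I would verify that $(X,E)$ satisfies the hypotheses of Corollary~\ref{algebraic_structures}. The curve $E$ is a smooth plane cubic, so $E=-K_X$, $E^2=K_X^2=0$, and $\nu_{E/X}\simeq\O_E(-K_X|_E)\simeq\O_E(3H-p_1-\cdots-p_9)$ has degree $3\cdot 3-9=0$. The class of $\nu_{E/X}$ in $\Pic^0(E)$ can be prescribed arbitrarily by moving the nine points along the cubic through them, so for very generic $\gamma$ it avoids the measure-zero non-Diophantine locus; in particular $\nu_{E/X}$ is non-torsion and Diophantine, so Theorem~\ref{Arnold} applies. By Theorem~\ref{hopf_of_blow_up9}, every Hopf transform of $X$ in $E$ is a blow-up of $\P^2$ in a length-nine subscheme, hence projective. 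Corollary~\ref{algebraic_structures} then yields that the set of algebraic structures on $M:=X-E$ is countable.

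Next I would produce infinitely many distinct algebraic structures. Writing $L:=\nu_{E/X}$ in the form $\mathrm{L}(\tau,q^u,q^v)$ as in~\ref{main_biholomorphism}, the computation in~\ref{subsection on sl2 action} identifies the base curves of the line bundles analytically cobordant to $L$ with the elliptic curves $\C/(\Z+\Z w)$, where $w=\frac{v-u\tau+r(k\tau+l)}{m\tau+n}$ and $k,l,m,n,r$ run over the integers with $kn-lm=1$. For very generic $\gamma$ — which I take to simultaneously guarantee the Diophantine property used above — these $w$ realize infinitely many distinct $j$-invariants, so there is a sequence $L'_i\to E'_i$ of pairwise cobordant line bundles with the $E'_i$ mutually non-isomorphic. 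Feeding each $L'_i$ into Definition~\ref{Hopf_transform} and applying Theorem~\ref{hopf_of_blow_up9} produces Hopf transforms $(Y_i,F_i)$ of $X$ in $E$ with $Y_i$ a projective blow-up of $\P^2$ and graft $F_i\simeq E'_i$; since $Y_i-F_i$ is biholomorphic to $X-E=M$, each $Y_i-F_i$ is a quasi-projective algebraic structure on $M$.

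Finally I would check these structures are pairwise non-isomorphic and assemble the conclusion. An algebraic isomorphism $Y_i-F_i\simeq Y_j-F_j$ extends to a birational map between the smooth projective surfaces $Y_i$ and $Y_j$, and a birational map of smooth surfaces cannot contract a curve of positive genus; hence $F_i$ is carried to a curve isomorphic to it, forcing $F_i\simeq F_j$ — the same argument already used in the proof of Corollary~\ref{algebraic_structures}. Since the $F_i$ lie in infinitely many isomorphism classes, the $Y_i-F_i$ exhibit infinitely many algebraic structures on $M$, and with the first step this set is exactly countably infinite. The step I expect to be the main obstacle is the number-theoretic one inside the third paragraph: showing that for a very generic $\gamma$ infinitely many isomorphism classes of elliptic curves really do occur among the candidate grafts of~\ref{subsection on sl2 action}, and that this non-degeneracy holds on the same locus of $\gamma$ on which $\nu_{E/X}$ is Diophantine; once that is granted, the rest is bookkeeping built on Theorems~\ref{hopf_of_blow_up9}, \ref{all compactifications} and Corollary~\ref{algebraic_structures}.
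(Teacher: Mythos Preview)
Your proposal is correct and follows essentially the same route as the paper: verify that the normal bundle of $E$ is degree-zero and Diophantine for very generic $\gamma$, use Theorem~\ref{hopf_of_blow_up9} to see every Hopf transform is projective, and then invoke Corollary~\ref{algebraic_structures}. The paper's proof is in fact a single sentence citing that corollary; your extra work on the lower bound (exhibiting infinitely many non-isomorphic grafts via the formulas in~\ref{subsection on sl2 action} and separating them by the birational-contraction argument) is already absorbed into the proof of Corollary~\ref{algebraic_structures} together with the remark at the end of~\ref{subsection on sl2 action} that for generic $L$ different choices yield different $E_{\mu'}$, so you are simply unpacking what the paper leaves packaged.
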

\begin{proof}
The claim follows from Corollary \ref{algebraic_structures} because every Hopf transform of $X$ is projective.

\end{proof}


\subsubsection{} There are rational surfaces containing a not anti-canonical elliptic curve of square zero. Let $Q \subset \P^2$ be a plane quartic with two nodes. Blow them up; the strict transform $\tilde{Q}$ has square $8$. By blowing up eight more points on $\tilde{Q}$, one gets a square-zero elliptic curve on $\P^2$ blown up in ten points. Notice that `having a node at a given point' is a codimension three condition, thus plane quartics with two nodes at $p,q\in\P^2$ form an eight-dimensional space. Thus, a generic tuple of ten points with two distinguished ones determines a unique plane quartic with two nodes through it.

\subsubsection{} By \cite[Lemma 2.2]{Koike_Uehara_non_proj}, punctured neighbourhood of a square-zero elliptic curve with non-torsion normal bundle carries no nonconstant functions. Hence none of the surfaces $X-E$ from Theorem \ref{many-algebraic-structures} is Stein.


\subsection{Surfaces of class VII}


All surfaces of class VII that contain an elliptic curve with a non-torsion degree-zero normal bundle are Hopf surfaces, as we prove below.

    \begin{proposition}\label{classification-for-class-vii}
        Let $S$ be a surface of class VII and $E \subset S$ a smooth elliptic curve with non-torsion normal bundle $\nu_{E/S}$ of degree zero. Then $S$ is a primary Hopf surface $H(\lambda,\mu)$ with $\lambda^n\neq\mu^m$ for any $n,m\in\Z$, or a secondary Hopf surface $H(\lambda,\mu)/\mu_n$ from \ref{secondary-with-non-torsion}, or a blow-up of such a surface away from $E$.
    \end{proposition}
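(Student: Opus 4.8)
The plan is to reduce to the case that $S$ is minimal and then invoke the classification of minimal surfaces of class VII, treating $b_2(S)=0$ and $b_2(S)>0$ separately.

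\emph{Reduction to the minimal case.} Since $S$ is of class VII we have $b_2^+(S)=0$, so the intersection form on $H^2(S,\R)$ is negative semi-definite. Let $\pi\colon S\to S_0$ be a blow-down to a minimal model. A standard computation gives $(\pi_*E)^2=E^2+\sum_i m_i^2$ with $m_i\ge 0$ the multiplicities of the successive strict transforms of $E$ at the blown-up points; since $E^2=0$ while $(\pi_*E)^2\le 0$ on the class VII surface $S_0$, all the $m_i$ vanish, so $\pi$ is a biholomorphism near $E$. Hence $E_0:=\pi(E)$ is a smooth square-zero elliptic curve on the minimal class VII surface $S_0$ with $\nu_{E_0/S_0}\cong\nu_{E/S}$ non-torsion, and $S$ is a blow-up of $S_0$ at points off $E_0$. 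It thus suffices to treat the case $S=S_0$ minimal.

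\emph{The case $b_2(S)=0$.} By the classification of minimal class VII surfaces with $b_2=0$ (Bogomolov; Li--Yau--Zheng; Teleman), $S$ is an Inoue surface or a Hopf surface; as Inoue surfaces contain no compact curves, $S$ is a Hopf surface. If $S$ is secondary, \ref{secondary-with-non-torsion} already writes it as a quotient $H(\lambda,\mu)/\mu_n$ of the required form. If $S$ is primary, I would first discard the non-diagonal type of \ref{subsubsection primary Hopfs non linear}: it has only one irreducible curve, and computing how the normal coordinate is scaled by a generator of $\pi_1$ shows that curve has normal bundle of finite order, so it is not $E$. Therefore $S=H(\lambda,\mu)$, and if $\lambda^n=\mu^m$ for some $(n,m)\ne(0,0)$ then $S$ is an elliptic surface whose elliptic curves are all fibres with torsion normal bundle (\ref{generalities on primary hopfs}, \cite{Barth_Hulek_Peters_Van_de_Ven}) --- contradicting the hypothesis on $\nu_{E/S}$. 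So $\lambda^n\ne\mu^m$ for all $(n,m)\ne(0,0)$, which is the first alternative in the statement.

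\emph{The case $b_2(S)>0$ --- the main obstacle.} I claim this case is vacuous, and I expect this to be the hard part. Since $E$ is a nonzero effective divisor with $E^2=0$, Enoki's classification of minimal class VII$_0$ surfaces with $b_2>0$ carrying such a divisor \cite{Enoki82} should force $S$ onto his explicit list (Enoki surfaces and Inoue surfaces of parabolic and hyperbolic type). On all of these, every square-zero curve is either a cycle of \emph{rational} curves or, on a parabolic Inoue surface, the distinguished elliptic curve, whose normal bundle is \emph{torsion}; both possibilities contradict $\nu_{E/S}$ being non-torsion of degree zero, so in fact $b_2(S)=0$. The subtle points here are to invoke Enoki's theorem in the version valid for an arbitrary square-zero effective divisor (not merely for compactifications of $\Tot(L)$ as in Theorem \ref{enoki-classification}), and to do the normal-bundle bookkeeping on the parabolic Inoue surfaces; a possible alternative is to use Theorem \ref{Arnold} to equip $E$ with a system of pseudoconcave tubular neighbourhoods, blow down the maximal compact analytic subset of the complement and apply Theorem \ref{enoki-classification}, but that route appears to require some form of the Diophantine hypothesis. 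Combining the three steps, $S$ is a blow-up, away from $E$, of $H(\lambda,\mu)$ with $\lambda^n\ne\mu^m$ for $(n,m)\ne(0,0)$, or of a secondary Hopf surface $H(\lambda,\mu)/\mu_n$ as in \ref{secondary-with-non-torsion}.
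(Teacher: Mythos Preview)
Your reduction to the minimal model is essentially the paper's argument, just spelled out: the paper phrases it as ``a curve of square zero does not intersect exceptional curves'', which follows from the fact that every curve on a smooth non-K\"ahler surface has non-positive self-intersection \cite[Ch.\:IV, Thm.\:2.14]{Barth_Hulek_Peters_Van_de_Ven}.

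Where you diverge is the minimal case. You split into $b_2=0$ (Bogomolov--Li--Yau--Zheng--Teleman classification, then exclude Inoue surfaces and the non-diagonal Hopf type) and $b_2>0$ (Enoki's list of surfaces carrying a square-zero effective divisor, then case-check). The paper bypasses this entirely with a single citation: \cite[Proposition 4.12]{Enoki} states directly that a minimal class VII surface containing a \emph{nonsingular} square-zero curve is a Hopf surface. That absorbs both your $b_2$ cases in one stroke and eliminates what you flag as ``the main obstacle''. Your route works, but the list you invoke for $b_2>0$ requires care --- the full classification of minimal class VII surfaces with $b_2>0$ is the GSS conjecture, and what Enoki proves unconditionally is precisely the statement the paper cites, so you are in effect re-deriving it. On the other hand, your treatment of the non-diagonal primary Hopf surface is actually a touch more careful than the paper's: you verify that the unique curve there has torsion normal bundle, whereas the paper's back-references (\ref{generalities on primary hopfs}, \ref{secondary-with-non-torsion}) only treat the diagonal case explicitly.
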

    \begin{proof}
        Every curve on a smooth non-K\"ahler surface has a non-positive square \cite[Ch.\:IV, Thm.\:2.14]{Barth_Hulek_Peters_Van_de_Ven}. Applying this fact to a minimal model of $S$, we obtain that a curve of square zero does not intersect exceptional curves.

        If $E \subset S$ is a nonsingular square-zero curve on a minimal surface $S$ of class VII, then $S$ is a Hopf surface \cite[Proposition 4.12]{Enoki}. Hopf surfaces that contain an elliptic curve with non-torsion normal bundle were classified in \ref{generalities on primary hopfs} and \ref{secondary-with-non-torsion}.
    \end{proof}


\section{Analytic Grothendieck group}\label{grothendieck-section}

\stepcounter{subsection}

\begin{definition}
    The {\bf Grothendieck group $K_0(\V_\C)$ of complex algebraic varieties} is the abelian group generated by classes of $\C$-varieties modulo scissor relations:
    $$[X - Y] = [X]-[Y],$$
    where $Y \subset X$ is a closed algebraic subvariety.\footnote{It is actually a ring with multiplication $[U]\cdot[V] = \left[{U}\times_k{V}\right]$, but we do not use it.}
\end{definition}

We introduce an interesting quotient of this group.

\begin{definition}\label{analytic-grothendieck-definition}
    The {\bf analytic Grothendieck group of varieties} $K_0^{an}$ is the quotient of $K_0(\V_\C)$ by differences $[U]-[V]$ where $U$ and $V$ are biholomorphic varieties.
\end{definition}

\begin{proposition}
\label{analytic K zero}
    The classes of any two elliptic curves in $K_0^{an}$ are equal.
\end{proposition}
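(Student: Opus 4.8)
The plan is to leverage the Hopf transform machinery on a single well-chosen rational surface and read off the equality of elliptic-curve classes from a pair of scissor decompositions. Fix an elliptic curve $E$ together with a Diophantine degree-zero line bundle $L = \nu_{E/X}$, realized as the normal bundle to the strict transform of $E$ in a blow-up $X$ of $\P^2$ at nine points of $E$ — this is exactly the setup of Theorem \ref{hopf_of_blow_up9}. Then $X - E \cong Y - F$ biholomorphically for every Hopf transform $Y$ of $X$ in $E$, where $(Y, F)$ is again a blow-up of $\P^2$ at nine points of a plane cubic $F$, and crucially $F$ ranges over a countable family of elliptic curves not isogenous to $E$ (by the discussion in \ref{subsection on sl2 action} and Theorem \ref{embeddings into all hopfs}). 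In $K_0^{an}$ the biholomorphism forces $[X] = [Y]$, and since both $X$ and $Y$ are obtained from $\P^2$ by nine blow-ups, the scissor relations give $[X] = [\P^2] + 9[\mathrm{pt}] = [Y]$ automatically — so that equation carries no information by itself. The information must instead come from writing $[X - E] = [X] - [E]$ and $[Y - F] = [Y] - [F]$: since $X - E$ and $Y - F$ are biholomorphic, $[X - E] = [Y - F]$ in $K_0^{an}$, and combining with $[X] = [Y]$ (which in fact holds already in $K_0(\V_\C)$) yields $[E] = [F]$ in $K_0^{an}$.

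**Next I would** upgrade this from "one elliptic curve equals a countable family of others" to "all elliptic curves are equal." Two observations close the gap. First, the construction is symmetric: the inverse of a Hopf transform is a Hopf transform (Corollary after Theorem \ref{all compactifications}), so starting from $(Y, F)$ one recovers $(X, E)$, and more generally any two members of an analytic-cobordance class of line bundles yield surfaces with equal elliptic-curve classes. Second — and this is where one must be slightly careful — the relation "$[E] = [E']$ in $K_0^{an}$" generated this way needs to connect arbitrary pairs of elliptic curves. Here one uses that the Diophantine, non-torsion line bundles form a full-measure (hence dense, hence nonempty-for-every-base-curve) subset of $\mathrm{Pic}^0(E)$, and that by \ref{subsection on sl2 action} the $j$-invariants of the graft curves $F$ attached to a fixed $(E, L)$ already form an infinite set; varying $L$ over all Diophantine bundles on $E$, the set of $j(F)$ reachable from $E$ should be shown to be dense — or, more cleanly, one argues that the binary relation "reachable by a Hopf transform" has the property that its transitive closure is the total relation on the (uncountable) set of $j$-invariants. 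The cleanest route is probably: given any two elliptic curves $E_1, E_2$, exhibit Diophantine cobordant line bundles whose base curves are $E_1$ and $E_2$ — i.e. show the analytic cobordance classes are "large" enough — using the explicit formula in \ref{subsection on sl2 action} for the base curve of the cobordant bundle, $\C/(\Z + \Z\cdot \frac{v - u\tau + r(k\tau+l)}{m\tau + n})$, as $(k,l,m,n) \in \mathrm{SL}_2(\Z)$ and $r \in \Z$ vary, together with the freedom in $(u,v)$ coming from the choice of $L$ on $E_1$.

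**The main obstacle** is precisely that last density/reachability statement. The Hopf-transform mechanism, applied to a fixed base curve $E_1$ with a fixed Diophantine bundle, only produces a countable set of partner curves $E_2$, whereas the set of all elliptic curves is uncountable, so no finite or countable chain of transforms can literally connect everything; one genuinely needs to let the line bundle $L$ on $E_1$ vary over the uncountable Diophantine locus and check that the union of all partner-curve sets is all elliptic curves (or at least that the transitive closure of the relation is everything). Working out the image of the map $(\tau; u, v; k, l, m, n, r) \mapsto \frac{v - u\tau + r(k\tau + l)}{m\tau + n}$ — i.e. showing that for any target modular parameter $\tau'$ one can solve for admissible real $u, v$ with $q^u$ giving a Diophantine bundle — is the computational heart of the argument. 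I expect this to reduce to: fix $\tau$, then for generic real $u < 0$ the bundle $L$ with $A = q^u$ is non-torsion and Diophantine, and the quantity $\frac{v - u\tau}{m\tau + n}$ (taking $k=r=0$, $l=-1$, $mn$ appropriate, or some such choice) sweeps out enough of the upper half-plane modulo $\mathrm{SL}_2(\Z)$ as $u, v$ vary; one then invokes that a single orbit of this form, having positive-dimensional closure, must hit a dense set of $j$-invariants, and density plus the fact that $K_0^{an}$-equality is a closed condition is not available (there is no topology on $K_0^{an}$), so one instead chains finitely many transforms to reach any prescribed target exactly. If chaining to an exact arbitrary target proves too rigid, the fallback is to prove the weaker-looking but sufficient statement directly: for \emph{any} $E_1, E_2$ there is a \emph{single} Hopf surface $H$ with $H - E_{\mu} \cong \Tot(L_1)$ and $H - E_{\lambda} \cong \Tot(L_2)$ for suitable Diophantine $L_i \to E_i$, which by Definition \ref{def of duals and cobordant} and Proposition \ref{isomorphism and cobordance} is exactly Hopf duality of $L_1^{-1}$ and $L_2$, and then the surface-level scissor argument above gives $[E_1] = [E_2]$ in one step.
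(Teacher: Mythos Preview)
Your first paragraph is correct and matches the paper's Step~1 exactly: build $X = \Bl_{9}\P^2$ with $E$ the strict transform of a cubic, Hopf-transform to $(Y,F)$, note $[X]=[Y]$ already in $K_0(\V_\C)$ and $[X-E]^{an}=[Y-F]^{an}$ by biholomorphism, hence $[E]^{an}=[F]^{an}$.

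The second part has a genuine gap. Your fallback claim --- that for \emph{any} pair $E_1,E_2$ there exists a single Hopf surface containing both with Diophantine normal bundles --- is not obvious and you do not prove it. Once $E_1$ and $E_2$ are fixed, the freedom in choosing $H(\lambda,\mu)$ with $E_\lambda\simeq E_1$ and $E_\mu\simeq E_2$ is only \emph{countable} (one picks $\lambda$ in the $\operatorname{SL}_2(\Z)$-orbit of $q^{\tau_1}$ and likewise for $\mu$), whereas the non-Diophantine locus in $\Pic^0(E_1)$, though of measure zero, is dense; it is not automatic that the countable set of available normal bundles meets the Diophantine locus. Your primary ``density/reachability'' route you already correctly flag as blocked, since $K_0^{an}$ carries no topology through which to pass to a limit.

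The paper's fix is a small but decisive variant of your fallback: rather than connect $E$ and $E'$ directly, go through an intermediate curve. For a \emph{fixed} $E$, as the second curve $F$ ranges over the (uncountable) moduli of elliptic curves, the set of $F$ for which $\nu_{E/H_{E,F}}$ fails to be Diophantine is meagre. Hence for arbitrary $E,E'$ the two co-meagre sets of ``good'' partners intersect; any $F$ in the intersection gives $[E]^{an}=[F]^{an}=[E']^{an}$ by two applications of Step~1. (One also uses, from \S2.5, that Diophantineness is preserved under analytic cobordism through a primary Hopf surface, so checking one of the two normal bundles in $H_{E,F}$ suffices.) This Baire-category intermediate-curve trick is the missing idea: it converts your countable-choice problem into an uncountable one where genericity is available.
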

\begin{proof}
    {\bf Step 1.} Suppose that $E, F$ are two elliptic curves and $L\to E$, $L' \to F$ are two analytically cobordant Diophantine line bundles. Then $[E] = [F] \in K_0^{an}$. Indeed, let $X$ be a blow-up of nine points in $\P^2$ such that the cubic passing through them is isomorphic to $E$, and the normal bundle of its strict transform is isomorphic to $L$. Even in $K_0(\V_{\C})$, one has $[X] = \mathbb{L}^2 + 10\mathbb{L} + [\mathrm{pt}]$, where $\mathbb{L}$ is the class of the affine line. By Theorem \ref{hopf_of_blow_up9}, the Hopf transform $Y$ of $X$ in $E$ by $L'\to F$ is also a blow-up of $\P^2$ in a length nine subscheme. Thus $[Y] = \mathbb{L}^2 + 10\mathbb{L} + 1 = [X]$. It follows from the relations $[X] = [E] + [X-E]$ and $[Y] = [F] + [Y-F]$ that
    $$
    [E]^{an} = [X]^{an} - [X-E]^{an} = [Y]^{an} - [Y-F]^{an} = [F]^{an} \in K_0^{an}.
    $$

    {\bf Step 2.} For any pair of elliptic curves $E$, $F$, there exists a primary Hopf surface $H = H_{E,F}$ containing $E$ and $F$. For a fixed $E$, the set of curves $F$ for which the normal bundle $\nu_{E/H_{E,F}}$ is not Diophantine is meagre in the moduli space of elliptic curves. Thus for arbitrary two curves $E$, $E'$, the complements of the corresponding meagre sets have nonempty intersection. Let $F$ be any curve from this intersection. Step 1 implies that $[E]^{an} = [F]^{an}$ and $[F]^{an} = [E']^{an}$, hence the claim.
\end{proof}

\subsubsection{} \label{Bittner-theorem} Although the classes of elliptic curves in $K_0^{an}$ 
coincide, the classes of different elliptic curves in $K_0(\V_\C)$ are distinct, as 
follows from Franziska Bittner's description of $K_0(\V_\C)$ in \cite{Bittner}. She proved 
that $K_0(\V_\C)$ is generated by classes of smooth projective varieties modulo relations 
\begin{equation}\label{blow-up}
[X] - [Y] = [Bl_YX] - [E],
\end{equation}
where $Y$ is a smooth subvariety of a smooth projective variety $X$ 
and $E\subset Bl_YX$ 
is the exceptional divisor of the blow-up of $X$ in $Y$. This result is very powerful as it 
enables us to construct motivic measures (i.e., maps from $K_0(\V_\C)$ to an abelian group) 
by checking only the relation (\ref{blow-up}). For example, 
consider the functional on smooth projective varieties sending a variety to its stable 
birationality class. This functional descends to a motivic measure
$$
K_0(\V_\C) \to \Z[\mathrm{SB}]
$$
with values in the free abelian group generated by stable birationality classes. Indeed, 
$Bl_YX$ is birational to $X$ and $E$ is stably birational to $Y$, so the relation (\ref{blow-up}) 
is preserved. We see that non-stably birational varieties have different classes in $K_0(\V_\C)$. 
Two curves are stably birational if and only if they are isomorphic \cite[V Ex.\! 2.1]{Hartshorne}. In particular, 
non-isomorphic elliptic curves have different classes in $K_0(\V_\C)$. 

\subsubsection{} Topological Euler characteristic is a homomorphism $K_0^{an} \to \Z$. However, we were not able to find more interesting analytic motivic measures. This motivates the following

\begin{problem}
    Is the class of an elliptic curve in $K_0^{an}$ trivial? If so, is $K_0^{an}$ isomorphic to $\Z$?
\end{problem}

\paragraph{Acknowledgements.} We are grateful to Piotr Achinger, to whom we owe the idea of Section \ref{grothendieck-section}. Many thanks to Robert Friedman, Micha\l~Kapustka, Takayuki Koike, Aleksandr Petrov, and Giulia Sacc\`a for insightful discussions, and to Nathan Chen, Andr\'es Fern\'andez Herrero, and Morena Porzio for reading an early draft of the paper.

\bibliographystyle{alpha} 
\bibliography{mybib.bib} 

\begin{multicols}{2}
\noindent {\sc {Anna Abasheva} \\
Columbia University,\\
Department of Mathematics, \\
2990 Broadway, \\
New York, NY, USA}\\
{\tt anabasheva(at)math.columbia.edu}

\columnbreak

\noindent {\sc {Rodion D\'eev}\\
Institute of Mathematics, \\ Polish Academy of Sciences,\\
\'Sniadeckich 8, \\ 
Warsaw, Poland}\\
{\tt rdeev(at)impan.pl}

\end{multicols}

\end{document}